\theoremstyle{plain}
\newtheorem{Thm}{Theorem}[section]
\newtheorem{Lem}[Thm]{Lemma}
\newtheorem{Prop}[Thm]{Proposition}
\newtheorem{Cor}[Thm]{Corollary}
\theoremstyle{definition}
\newtheorem{Ex}[Thm]{Example}
\newtheorem{Rem}[Thm]{Remark}
\DeclareMathOperator{\Aut}{Aut}
\DeclareMathOperator{\Autd}{Aut^{op}}
\DeclareMathOperator{\FrG}{FG}
\DeclareMathOperator{\FrR}{FR}
\DeclareMathOperator{\FrRS}{FRS}
\newcommand{\FR}{\FrR(\Omega)}
\newcommand{\FRS}{\FrRS(\Omega)}
\newcommand{\FG}{\FrG(\Omega)}
\newcommand{\op}[1]{#1^\mathrm{op}}
\renewcommand{\a}{\mathbf{a}}
\renewcommand{\c}{\mathbf{c}}
\renewcommand{\d}{\mathbf{d}}
\renewcommand{\u}{\mathbf{u}}
\newcommand{\s}{\mathbf{s}}
\renewcommand{\t}{\mathbf{t}}
\newcommand{\z}{\mathbf{z}}
\renewcommand{\r}{\mathbf{r}}
\newcommand{\x}{\mathbf{x}}
\newcommand{\y}{\mathbf{y}}
\renewcommand{\phi}{\varphi}
\newcommand{\C}{\mathcal{C}}
\newcommand{\X}{\mathcal{X}}
\newcommand{\Y}{\mathcal{Y}}
\newcommand{\1}{\mathbf{1}}
\newcommand{\we}{\wedge}
\newcommand{\da}{^\downarrow}
\newcommand{\ol}[1]{\overline{#1}}
\def\NN{{\mathbb N}}
\begin{document}

\title[Embedding of restriction semigroups]{Embedding in 
factorisable restriction
monoids}

\author{Victoria Gould}
\address{Department of Mathematics\\ University of York\\ Heslington\\ York\\ YO10 5DD\\ UK}
\email{victoria.gould@york.ac.uk}

\author{Mikl\'os Hartmann}
\address{Bolyai Institute, University of Szeged, Aradi v\'ertan\'uk tere 1, Szeged, Hungary, H-6720}
\email{hartm@math.u-szeged.hu}

\author{M\'aria B.\ Szendrei}
\address{Bolyai Institute, University of Szeged, Aradi v\'ertan\'uk tere 1, Szeged, Hungary, H-6720}
\email{m.szendrei@math.u-szeged.hu}

\date{January 18, 2016}

\thanks{Research partially supported by the Hungarian National Foundation for Scientific Research grants no.~K083219, K104251, PD115705, and by EPSRC grant no.\ EP/IO32312/1.
\vskip 3pt\noindent
{\it Mathematical Subject Classification (2010):} 20M10, 20M05.\\
{\it Key words:} Restriction semigroup, weakly $E$-ample semigroup, (almost) factorisability, semidirect product.}

\begin{abstract}
Each restriction semigroup is proved to be embeddable in a factorisable restriction monoid, or, equivalently,
in an almost factorisable restriction semigroup. 
It is also established that each restriction semigroup has a proper cover 
which 
is embeddable in a semidirect product of a semilattice by a group. 
\end{abstract}

\maketitle

\section{Introduction}

Restriction semigroups are non-regular generalisations of inverse semigroups.
They are semigroups equipped with two additional unary operations which satisfy certain identities. 
In particular, each inverse semigroup determines a restriction semigroup where the unary operations
assign the idempotents $aa^{-1}$ and $a^{-1}a$, respectively, to any element $a$.
The class of restriction semigroups is just the variety of algebras generated by these restriction semigroups
obtained from inverse semigroups, see \cite{free-amp}. Restriction semigroups  (formerly called weakly $E$-ample semigroups) have arisen from a number of mathematical perspectives. 
For a historical overview of restriction semigroups   and a detailed introduction to their fundamental properties
the reader is referred to \cite{http}.

So far, a number of important results of the rich structure theory of inverse semigroups have been recast in the broader setting of  restriction semigroups.
The current paper is a contribution to this body of work.

In the theory of inverse semigroups, semidirect products of semilattices by groups play an important role.
One of the reasons for this is that every inverse semigroup divides such a semidirect product
(see \cite{L=8} for the main results in this area and for references).
In fact, something stronger is true, namely that every inverse semigroup can be embedded into an (idempotent separating) 
homomorphic image of such a semidirect product.
One way to see this result is through extensions of partial isomorphisms of semilattices: an inverse semigroup determines partial isomorphisms of 
its semilattice, and constructing the corresponding semidirect product in fact includes embedding the semilattice into a bigger one, 
and finding a group acting on the bigger semilattice such that the original partial isomorphisms are restrictions of the automorphisms 
determined by the group action.

The situation for restriction semigroups is similar, but also different in some crucial respects. 
In this case groups are replaced by monoids, which gives rise to several problems.
The first result analogous to that formulated above for inverse semigroups was obtained by the third author in \cite{M2013}: 
she has shown that every restriction semigroup can be embedded into a (projection separating) homomorphic image of
a so-called $W$-product of a semilattice by a monoid, that is, into an almost left factorisable restriction semigroup (cf.\ \cite{GSz}). 
An alternative proof, based on the idea of extending partial isomorphisms to injective endomorphisms, has been presented
by Kudryavtseva in \cite{K2014}.

In general, the monoid acts in a $W$-product by injective endomorphisms on the semilattice, and this was the case in the results
of \cite{M2013} and \cite{K2014} mentioned. In particular, if the monoid acts by automorphisms on the semilattice then
the $W$-product becomes a semidirect product.

In this paper we obtain the strongest possible result in this direction by showing that the monoid and the semilattice can be chosen such that 
the monoid acts on the semilattice by automorphisms, 
thus demonstrating that any restriction semigroup can be embedded into a (projection separating) homomorphic image of 
a semidirect product of a semilattice by a monoid 
where the monoid acts on the semilattice by automorphisms.
In other words, we prove that  
any restriction semigroup can be embedded into an almost factorisable restriction semigroup,
or, equivalently, into a factorisable restriction monoid
(cf.\ \cite{GSz}, \cite{M2013}).
Furthermore, the construction applied in the proof of this result allows us to show that the proper cover embeddable in a $W$-product,
which is
provided for a restriction semigroup in \cite{M2012} and \cite{K2014}, is embeddable also in a semidirect product of a semilattice by a group.

Note that restriction semigroups in general, 
proper restriction semigroups, almost factorisable restriction semigroups
and factorisable restriction monoids 
are defined in a left-right dual manner, and
semidirect products of semilattices by monoids where monoids act on semilattices by automorphisms are left-right symmetric.
However,
this is not the case with almost left factorisable restriction semigroups or $W$-products. 
Therefore the embedding results of the present paper reflect the left-right symmetry of restriction and proper restriction semigroups, 
while this was not the case with the foregoing results in \cite{M2012} and \cite{K2014}.
In the last section, 
we strengthen some results in \cite{K2014} and \cite{Jones} by proving that each ultra $F$-restriction, or, 
equivalently, each perfect restriction  monoid $S$ 
whose greatest reduced factor is a free monoid is embeddable in a semidirect product of a semilattice by a monoid 
in such a way that 
the monoid acts on the semilattice by automorphisms, and
all congruences of $S$ extend to the semidirect product.

\section{Preliminaries}

In general, mappings and partial mappings are considered as right operands, and so their products are calculated from the left to the right.
This applies, amongst others, to the group of automorphisms $\Aut Y$ and to the Munn semigroup $T_Y$ of a semilattice $Y$, 
and to the symmetric inverse monoid $I(X)$ on a set $X$.
By writing $\Autd Y$, $\op{T}_Y$, etc., we indicate that the (partial) mappings in them are considered left operands, and their products are calculated
from the right to the left.

\subsection{Restriction semigroups} \cite{http}\quad
A {\em left restriction semigroup} is defined to be an algebra of type $(2,1)$, more precisely, an algebra $S=(S;\cdot,{}^+)$
where $(S;\cdot)$ is a semigroup and ${}^+$ is a unary operation such that the following
identities are satisfied:
\begin{equation}\label{id-lr}
x^+ x = x,\quad 
x^+ y^+=y^+ x^+,\quad
(x^+ y)^+= x^+ y^+,\quad 
xy^+=(xy)^+ x.
\end{equation}
A {\em right restriction semigroup} is defined dually, that is, it is an algebra $S=(S;\cdot,{}^*)$
satisfying the duals of the identities (\ref{id-lr}). Finally, if $S=(S;\cdot,{}^+,{}^*)$ is an algebra  of type $(2,1,1)$ where
$S=(S;\cdot,{}^+)$ is a left restriction semigroup, $S=(S;\cdot,{}^*)$ is a right restriction semigroup and
the identities
\begin{equation}\label{id-rp}
{(x^+)}^*=x^+,\quad
{(x^*)}^+=x^*
\end{equation}
hold then it is called a {\em restriction semigroup}.
Notice that the defining properties of a restriction semigroup are left-right dual. 
Therefore in the sequel dual definitions and statements will not be explicitly formulated.

Among restriction semigroups, the notions of a subalgebra, homomorphism, congruence and factor algebra are understood in type $(2,1,1)$,
which is emphasised by using the expressions 
$(2,1,1)$-subsemigroup, $(2,1,1)$-morphism, $(2,1,1)$-congruence and $(2,1,1)$-factor semigroup, respectively.
A restriction semigroup with identity element is also called a {\em restriction monoid}. 

In particular, each monoid $M$ becomes a restriction monoid
by defining $a^+=a^*=1$ for any $a\in M$. 
It is easy to see that these restriction semigroups  
are just those with both unary operations being constant.
Such a restriction semigroup (monoid) 
is called {\em reduced}.
Notice that the submonoids, congruences, etc.\ of monoids and 
the $(2,1,1)$-subsemigroups, $(2,1,1)$-congruences, etc.\ of the reduced
restriction semigroups (monoids) obtained from them coincide. 
Therefore we often consider reduced restriction semigroups (monoids) just as monoids, and vice versa.
Similarly, each semilattice $Y$ becomes a restriction semigroup
by defining $a^+=a^*=a$ for every $a\in Y$. 
These restriction semigroups  
are just those where both unary operations equal the identity map.
Clearly, the $(2,1,1)$-subsemigroups, $(2,1,1)$-congruences, etc.\ of such a
restriction semigroup $Y$ are just the subsemilattices, congruences, etc.\ of the semilattice $Y$.
Therefore such a restriction semigroup is simply considered and called a semilattice.

Let $S$ be any restriction semigroup. By (\ref{id-rp}), we have $\{x^+:x\in S\}=\{x^*:x\in S\}$. This set is called
{\em the set of projections of $S$}, and is denoted by $P(S)$.
By (\ref{id-lr})
and its dual, $P(S)$ 
can be seen to be
a $(2,1,1)$-subsemigroup in $S$
which is a semilattice. 
Notice that a restriction semigroup $S$ is reduced if and only if $P(S)$ is a singleton, and, if this is the case, then
the unique element of $P(S)$ is the identity element of $S$.

Given a restriction semigroup $S$, we define a relation $\le$ on $S$ such that, for every $a,b\in S$, 
$$a\le b\quad \hbox{if and only\ if}\quad a=a^+b.$$
It is easy to see that $a \le b$ if and only if $a=eb$ for some $e \in P(S)$.
Observe that the dual of this relation is the same since $a=a^+ b$ implies $a=b(a^+b)^*=ba^*$,
and the dual implication is also valid. 
The relation $\le$ is a compatible partial order on $S$, and it extends the natural partial order of
the semilattice $P(S)$. It is called the {\em natural partial order on $S$}.

We also consider a relation on $S$, denoted by $\sigma_S$, or simply $\sigma$: for any $a,b\in S$, let 
$$a\ \sigma\ b\quad \hbox{if and  only  if }\quad ea=eb \quad {\rm for\ some}\quad e\in P(S).$$
Again notice that if there exists $e\in P(S)$ with $ea=eb$ then there exists also $f\in P(S)$ with $af=bf$, and conversely.
Therefore the relation defined dually to $\sigma$ coincides with $\sigma$. 
The relation $\sigma$ is the least congruence on $S=(S;\cdot)$ where $P(S)$ is in a congruence class, which we denote by $P(S)\sigma$. 
Consequently, it is
the least $(2,1,1)$-congruence $\rho$ on $S=(S;\cdot,{}^+,{}^*)$ such that the $(2,1,1)$-factor semigroup $S/\rho$ is reduced.
Therefore we call $\sigma$ {\em the least reduced $(2,1,1)$-congruence on $S$\/}. Obviously, $P(S)\sigma$ is
the identity element of $S/\sigma$. 
The reduced restriction monoid $S/\sigma$ is often considered just as a monoid $S/\sigma=(S/\sigma;\cdot,P(S)\sigma)$.

Let $S,T$ be restriction semigroups. 
We say that $S$ is {\em proper} if, for every $a,b\in S$, the relations $a^+=b^+$ and $a\,\sigma\,b$ together imply $a=b$ and the dual property also holds.
It is easy to see that 
each $(2,1,1)$-subsemigroup of a proper restriction semigroup is proper.
A $(2,1,1)$-morphism $\varphi\colon T\to S$ is called {\em projection separating} if $e\varphi=f\varphi$ implies $e=f$ for every $e,f\in P(T)$.
We say that $T$ is a {\em proper cover} of $S$ if $T$ is proper and there exists a projection separating $(2,1,1)$-morphism from $T$ onto $S$.

\subsection{Semidirect products and factorisability}\label{sd-f} \cite{GSz}, 
 \cite{M2013}\quad
Let $T$ be a monoid and $Y=(Y;\we)$ a semilattice. 
We say that {\em $T$ acts on $Y$ (on the left) by automorphisms} if a monoid homomorphism $\alpha\colon T\to \Autd Y,\ t\mapsto\alpha_t$ is given. 
The element $\alpha_t a\ (t\in T,\ a\in Y)$ is usually denoted by $t\cdot a$.
The fact that $T$ acts on $Y$ by automorphisms is equivalent to requiring that 
the mapping $\alpha_t\colon Y\to Y,\ a\mapsto t\cdot a$ is bijective for every $t\in T$, and
the following equalities are valid for every $a,b\in Y$ and $t,u\in T$: 
\begin{equation}\label{act0}
t\cdot (a\we b) = t\cdot a\we t\cdot b,\quad u\cdot(t\cdot a) = (ut)\cdot a,\quad 1\cdot a=a.
\end{equation}
The {\em semidirect product} $Y\rtimes T$ is the algebra of type $(2,1,1)$ defined on the set 
$Y\times T=\{(t\cdot a,t):a\in Y,\ t\in T\}$ 
with the following operations:
\[
(t\cdot a,t)(u\cdot b,u)=(t\cdot(a\we u\cdot b),tu),\quad
(t\cdot a,t)^+=(t\cdot a,1)\quad\hbox{and}\quad (t\cdot a,t)^*=(a,1).
\]
Defining the multiplication this way may seem awkward, however, note that since $T$ acts by automorphisms, the multiplication and the $^+$ operation are in fact the same as in the `usual' semidirect product.
The reason for this unusual formulation is in the definition of the operation $^*$: since $T$ is not necessarily a group, one cannot use $t^{-1}$.
It is routine to check that $Y\rtimes T$ is a proper restriction semigroup where
$P(Y\rtimes T)=\{(a,1):a\in Y\}$ is isomorphic to $Y$, the relation $\sigma$ is just the kernel of the
second projection $Y\rtimes T\to T,\ (t\cdot a,t)\mapsto t$, and so $(Y\rtimes T)/\sigma$ is isomorphic to $T$.
Moreover, the natural partial order is the following relation:
$(t\cdot a,t)\le(u\cdot b,u)$ if and only if $a\le b$ and $t=u$.
The semidirect product $Y\rtimes T$ is a (proper restriction) monoid if and only if $Y$ has an identity element.
In this case the fact that $T$ acts by automorphisms implies that $t\cdot {\bf 1}={\bf 1}$ for any $t \in T$ where ${\bf 1}$ is the identity element of $Y$.
To avoid confusion, we call the attention to the fact that, from now on, when speaking about a semidirect product of a semilattice by a monoid, it is always
meant to be a restriction semigroup just defined. In particular, the action of the monoid on the semilattice is taken  to be an action by automorphisms.

Note that the semidirect product $Y\rtimes T$ is isomorphic to the reverse semidirect product $T\ltimes Y$
where the right action of $T$ on $Y$ is given by the monoid homomorphism $T\to\Aut Y,\ t\mapsto \alpha_t^{-1}$.
By definition (see \cite{GSz}), the reverse semidirect product $T\ltimes Y$ is, in fact, a $W$-product $W(T,Y)$.
Therefore the same is the case with the semidirect product $Y\rtimes T$.

In this paper, an action of a monoid $T$ on a semilattice $Y$ with automorphisms will frequently come as a restriction of an action of a group $G$
to its submonoid $T$.
In this case, we will prefer representing the elements of the semidirect product $Y\rtimes T$, as it is usual within the inverse semigroup
$Y\rtimes G$, in the form $(a,t)$ with $a\in Y,\ t\in T$.
In this form, the operations are:
\[
(a,t)(b,u)=(a\we t\cdot b,tu),\quad
(a,t)^+=(a,1)\quad\hbox{and}\quad (a,t)^*=(t^{-1}\cdot a,1),
\]
and the natural partial order is $(a,t)\le(b,u)$ if and only if $a\le b$ and $t=u$.

Note that, instead of assuming that a monoid $T$ is embeddable in a group and it acts on a semilattice $Y$ by automorphisms, 
we can require without loss of generality that $T$ is a submonoid in a group $G$ which it generates, $G$ acts on $Y$, 
and the action of $T$ is the restriction of the action of $G$.
For, recall first that if $T$ is embeddable in a group then the inclusion map $\kappa\colon T\to G_T$ where
$G_T=\langle T \!\mid\! \Xi\rangle$ is given by the defining relations $\Xi=\{tuv^{-1}: t,u,v\in T\ \hbox{and}\ tu=v\ \hbox{in}\ T\}$
is an embedding.
Second, if the action of $T$ on $Y$ is defined by the homomorphism $\alpha\colon T\to \Autd Y$ then
$\alpha_t\alpha_u\alpha_v^{-1}$ is the identity automorphism for every $t,u,v\in T$ with $tu=v$, and so
there exists a homomorphism $\beta\colon G_T\to \Autd Y$ such that $\alpha=\kappa\beta$.
Thus $\beta$ defines an action of $G_T$ on $Y$, and its restriction to $T$ is just $\alpha$.

Factorisable restriction monoids and almost factorisable restriction semigroups are introduced in \cite{M2013} (see also \cite{GSz}) analogously to 
factorisable inverse monoids and almost factorisable inverse semigroups, respectively.
A restriction monoid $F$ is called {\em factorisable} if $F=P(F)U(F)$ where $U(F)=\{u\in F : u^+=u^*=1\}$ is the  reduced $(2,1,1)$-subsemigroup of $F$ analogous to the group of units in an inverse monoid.
Note that $F=P(F)U(F)$ if and only if $F=U(F)P(F)$, therefore factorisability is a left-right symmetric property.
The notion of an {\em almost factorisable} restriction semigroup is defined by means of permissible subsets but, instead of the definition,
it suffices for the purposes of this paper to recall how they relate to factorisable restriction monoids (\cite[Proposition 3.10 and Theorem 3.11]{M2013}).
If $F$ is a factorisable restriction monoid then both $F$ and the $(2,1,1)$-subsemigroup $F\setminus U(F)$ of $F$ are almost factorisable, and conversely, 
each almost factorisable restriction semigroup is, up to $(2,1,1)$-ismomorphism, of the latter form.
In particular, this implies that a restriction semigroup is $(2,1,1)$-embeddable in a factorisable restriction monoid if and only if it is $(2,1,1)$-embeddable in an
almost factorisable restriction semigroup.

In the proof of the main result of the paper, we need the following description 
of these restriction semigroups by means of semidirect products.

\begin{Prop} \cite[Theorem 3.12]{M2013} 
A restriction monoid (semigroup) is factorisable (almost factorisable) if and only if it is
a $(2,1,1)$-morphic image of a semidirect product of a semilattice with identity (a semilattice) by a monoid.
\end{Prop}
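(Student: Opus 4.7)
The plan is to prove both implications, focusing on the nontrivial ``only if'' direction.

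For the ``if'' direction, I would first verify that every semidirect product $Y\rtimes T$ with $Y$ having identity $\mathbf{1}$ is itself a factorisable restriction monoid. Since $T$ acts by automorphisms, $t\cdot\mathbf{1}=\mathbf{1}$ for every $t\in T$, so each $(\mathbf{1},t)$ has $^+$ and $^*$ equal to $(\mathbf{1},1)$, i.e.\ lies in $U(Y\rtimes T)$. Then the identity $(t\cdot a,t)=(t\cdot a,1)\cdot(\mathbf{1},t)$, verified by the semidirect product multiplication rule, exhibits every element as a projection times a unit. Secondly, I would check that factorisability passes to $(2,1,1)$-morphic images: if $\phi\colon F\to F'$ is surjective and $F=P(F)U(F)$, then projections map to projections and units to units, so $F'=\phi(P(F))\phi(U(F))\subseteq P(F')U(F')$, as required.

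For the ``only if'' direction in the factorisable case: given a factorisable monoid $F=P(F)U(F)$, the goal is to construct a semilattice $Y$ with identity, a monoid $T$ acting on $Y$ by automorphisms, and a surjective $(2,1,1)$-morphism $\phi\colon Y\rtimes T\to F$. I would choose $T$ to be a monoid admitting a surjective monoid homomorphism $\rho\colon T\to U(F)$ and embeddable in a group $G_T$; for instance, $T$ may be taken as the free monoid on a generating set of $U(F)$, which embeds in the corresponding free group. By the observation emphasised in Section~\ref{sd-f}, an action of such a $T$ by automorphisms arises as the restriction of a $G_T$-action. I would then build $Y$ as a semilattice with identity on which $G_T$ acts by automorphisms, together with a semilattice morphism $\pi\colon Y\to P(F)$ with $P(F)\subseteq\pi(Y)$, and define $\phi$ on generators by $(y,t)\mapsto\pi(y)\cdot\rho(t)$.

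The main obstacle is to set up $Y$ and the $G_T$-action cleanly so that this $\phi$ is well-defined and respects multiplication together with both unary operations. A natural route is a presentation: form the free semilattice with identity on symbols $g\cdot e$ for $g\in G_T$ and $e\in P(F)$, equip it with the $G_T$-action $h\cdot(g\cdot e)=(hg)\cdot e$, and quotient by the minimal congruence that makes $\pi$ induce a well-defined semilattice morphism compatible with the identity $ue=(ue)^+\cdot u$ holding in $F$ (and its dual). Checking that $\phi$ then preserves products and the operations $^+$, $^*$ is a careful but routine computation using the definitions of the semidirect product; surjectivity follows from $F=P(F)U(F)$ together with the surjectivity of $\pi$ on $P(F)$ and of $\rho$.

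Finally, the almost factorisable case reduces to the factorisable one via the correspondence recalled in the excerpt: an almost factorisable restriction semigroup is $(2,1,1)$-isomorphic to $F'\setminus U(F')$ for some factorisable restriction monoid $F'$. Applying the factorisable result to $F'$ yields a surjective $(2,1,1)$-morphism $Y^1\rtimes T\to F'$ with $Y^1$ a semilattice with identity $\mathbf{1}$; restricting on both sides to the complement of the unit groups gives a surjection from $(Y^1\setminus\{\mathbf{1}\})\rtimes T$ onto $F'\setminus U(F')$, presenting the given almost factorisable semigroup as a $(2,1,1)$-morphic image of a semidirect product of a semilattice (without identity) by a monoid.
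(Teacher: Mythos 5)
First, a point of comparison: the paper does not prove this Proposition at all --- it is quoted verbatim from \cite[Theorem 3.12]{M2013} --- so there is no in-paper argument to match yours against; your attempt has to stand on its own. Your ``if'' direction is correct and standard. The substance is entirely in the ``only if'' direction, and there your sketch defers the real difficulty to an unproven ``careful but routine computation''. Concretely: for $\phi(y,t)=\pi(y)\rho(t)$ to be a $(2,1,1)$-morphism you are \emph{forced} to have $\pi(t\cdot y)=\bigl(\rho(t)\pi(y)\bigr)^+$ (from preservation of multiplication) and $\pi(t^{-1}\cdot y)=\bigl(\pi(y)\rho(t)\bigr)^*$ (from preservation of $^*$) for all $t\in T$ and $y\in Y$. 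Since $Y$ carries an action of the whole group $G_T$, the map $\pi$ must be defined on every generator $g\cdot e$ with $g$ an arbitrary reduced word over $T\cup T^{-1}$, and the consistency of any recursive definition under free-group cancellation --- e.g.\ that $\pi\bigl((tu^{-1})\cdot e\bigr)$ computed from the reduced form of $tu^{-1}$ agrees with $\bigl(\rho(t)(e\rho(u))^*\bigr)^+$ --- is exactly the non-trivial content; it is the analogue of the inductions on nice factorisations in Lemmas~\ref{Onedir} and \ref{Yuck} of this paper, not a routine check. Moreover your description of $Y$ is circular: the quotient is specified as ``the minimal congruence that makes $\pi$ well-defined'' before $\pi$ has been defined on the generators, and you do not address whether that congruence is $G_T$-invariant (needed for the action to descend to the quotient) or whether it collapses the copy of $P(F)$, on which surjectivity of $\phi$ depends.

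The reduction of the almost factorisable case contains a concrete error. From a surjective $(2,1,1)$-morphism $\phi\colon Y^1\rtimes T\to F'$ you cannot in general restrict to a surjection $(Y^1\setminus\{\mathbf{1}\})\rtimes T\to F'\setminus U(F')$: units map to units, but a non-unit $(a,t)$ with $a\ne\mathbf{1}$ may satisfy $\phi(a,1)=1_{F'}$, hence $\phi(a,t)\in U(F')$, because $\phi$ need not be projection separating; the restricted map then fails even to take values in $F'\setminus U(F')$. This is precisely the pitfall that Lemma~\ref{new} of the paper avoids (in the reverse passage) by adjoining a \emph{fresh} identity $e\notin Y$ even when $Y$ already has one. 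Your step can be repaired --- the set $\{a\in Y^1:\phi(a,1)\ne 1_{F'}\}$ is a $G_T$-invariant ideal of $Y^1$ (using that $(ue)^+=1$ forces $e=1$ for $u\in U(F')$, $e\in P(F')$), and restricting the semidirect product to that ideal does yield a surjection onto $F'\setminus U(F')$ --- but that argument is missing from your proposal.
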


Given an almost factorisable restriction semigroup $F^-$ in the above form, now we establish for later use, how to construct a factorisable restriction monoid 
$F$ such that $F^-=F\setminus U(F)$.

\begin{Lem}\label{new}
Consider an almost factorisable restriction semigroup $F^-=(Y\rtimes T)/\rho$, where $T$ is a monoid acting on the semilattice $Y$ by automorphisms, and
$\rho$ is a $(2,1,1)$-congruence on the restriction semigroup $Y\rtimes T$.
Let $Y^e$ be the semilattice obtained from $Y$ by adjoining an identity element $e\notin Y$ even if $Y$ has an identity element.
Extend the action of $T$ on $Y$ to $Y^e$ by putting $t\cdot e=e$ for every $t\in T$, and consider the relation
$\rho_e=\rho\cup\imath$
on the semidirect product $Y^e\rtimes T$ where $\imath$ is the equality relation on $U=(Y^e\rtimes T)\setminus (Y\rtimes T)$.
Then $\rho_e$ is a $(2,1,1)$-congruence on the restriction monoid $Y^e\rtimes T$ extending $\rho$, $F=(Y^e\rtimes T)/\rho_e$ is a factorisable restriction monoid
with $U(F)=\{u\rho_e: u\in U\}=\{\{(e,t)\}:t\in T\}$, 
and the mapping $F^-\to F,\ (a,t)\rho\mapsto (a,t)\rho_e\ ((a,t)\in Y\rtimes T)$ is an injective $(2,1,1)$-morphism with range $F\setminus U(F)$.
\end{Lem}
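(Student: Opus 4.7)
The plan is to first establish that $\rho_e$ is a $(2,1,1)$-congruence on $Y^e \rtimes T$, and then to read off the remaining assertions about $F = (Y^e\rtimes T)/\rho_e$ by routine computation. Since the unary operations map $Y \rtimes T$ into itself and $U$ into itself, and $\rho_e$ is defined separately on the two blocks, compatibility with $^+$ and $^*$ is immediate; the technical core is multiplicative compatibility.

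Note that $Y\rtimes T$ is a two-sided ideal of $Y^e\rtimes T$ while $U$ is closed under multiplication, so given $p\,\rho_e\,p'$ and $q\,\rho_e\,q'$ the cases where both pairs lie wholly in $Y\rtimes T$, or wholly in $U$, follow from $\rho$ being a congruence and from the reflexivity of $\imath$, respectively. The genuine case is a pair $(a,s)\,\rho\,(a',s')$ in $Y\rtimes T$ multiplied by $u=(e,t)\in U$. Applying the semidirect product formula gives $(a,s)u=(a,st)$, $(a',s')u=(a',s't)$ and $u(a,s)=(t\cdot a,ts)$, $u(a',s')=(t\cdot a',ts')$, so what must be shown is
\[
(a,st)\,\rho\,(a',s't)\qquad\text{and}\qquad (t\cdot a,ts)\,\rho\,(t\cdot a',ts').
\]
The crucial auxiliary step is: if $(d,1)\,\rho\,(d',1)$ for $d,d'\in Y$, then $(d,t)\,\rho\,(d',t)$ for every $t\in T$. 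This will be obtained by right-multiplying $(d,1)\,\rho\,(d',1)$ first by $(d,t)$ and then by $(d',t)$ within $Y\rtimes T$, producing $(d,t)\,\rho\,(d\we d',t)$ and $(d\we d',t)\,\rho\,(d',t)$, and invoking transitivity. Setting $d=s^{-1}\cdot a$ and $d'=s'^{-1}\cdot a'$, so that $(d,1)=(a,s)^*$ and $(d',1)=(a',s')^*$ are $\rho$-related, one obtains $(d,t)\,\rho\,(d',t)$; then the products $(a,s)(d,t)=(a,st)$ and $(a',s')(d',t)=(a',s't)$ combined with the congruence property of $\rho$ on $Y\rtimes T$ deliver the first desired relation. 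The second relation is handled dually: starting from $(a,s)^+=(a,1)\,\rho\,(a',1)=(a',s')^+$, left-multiplying by $(t\cdot a,t)$ and by $(t\cdot a',t)$ in $Y\rtimes T$ (and using that the action of $t$ preserves $\we$) gives $(t\cdot a,t)\,\rho\,(t\cdot a',t)$; then $(t\cdot a,t)(a,s)=(t\cdot a,ts)$ and $(t\cdot a',t)(a',s')=(t\cdot a',ts')$ finish the case.

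Once $\rho_e$ is known to be a $(2,1,1)$-congruence, the rest is bookkeeping. The monoid $Y^e\rtimes T$ has identity $(e,1)$, so $F$ is a restriction monoid with identity $(e,1)\rho_e=\{(e,1)\}$. Since any element of $F$ whose $^+$ equals $1_F$ must have a representative in $U$ (a projection of the form $(a,1)$ with $a\in Y$ lies in a different $\rho_e$-class than $(e,1)$), we read off $U(F)=\{u\rho_e:u\in U\}=\{\{(e,t)\}:t\in T\}$. Factorisability $F=P(F)U(F)$ is witnessed by the decomposition $(a,s)=(a,1)(e,s)$ in $Y^e\rtimes T$. The map $(a,t)\rho\mapsto(a,t)\rho_e$ is well-defined, $(2,1,1)$-morphic and injective because the restriction of $\rho_e$ to $Y\rtimes T$ coincides with $\rho$, and its range is $F\setminus U(F)$ since each $\rho_e$-class meets exactly one of $Y\rtimes T$ or $U$. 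The main obstacle is the auxiliary claim on projections: it is what makes the new multiplicative relations across the divide between $Y\rtimes T$ and $U$ captured by $\rho$ itself, and without it the verification of multiplicative compatibility would be circular.
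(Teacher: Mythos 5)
Your proof is correct and follows essentially the same route as the paper: both reduce the verification that $\rho_e$ is a congruence to the single nontrivial case of multiplying a $\rho$-related pair of $Y\rtimes T$ by an element $(e,t)\in U$, and both resolve that case by exploiting that the action is by automorphisms to pick the preimage of the first coordinate (your $d=\alpha_s^{-1}(a)$, the paper's $c$ with $t\cdot c=a$) as an auxiliary multiplier inside $Y\rtimes T$. The only cosmetic difference is that the paper multiplies both sides by the same element $(c,v)$ and then symmetrizes an inequality in the natural partial order, whereas you first upgrade $(d,1)\,\rho\,(d',1)$ to $(d,t)\,\rho\,(d',t)$ and multiply by two $\rho$-related elements; both are valid.
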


\begin{proof}
Obviously, $T$ acts on $Y^e$ by automorphisms, the semidirect product $Y^e\rtimes T$ is a restriction monoid with $U(Y^e\rtimes T)=U$, and
$Y\rtimes T$ is a $(2,1,1)$-subsemigroup and ideal of  $Y^e\rtimes T$. 
Moreover, it is also clear that the relation $\rho_e$ is an equivalence whose restriction to $Y\rtimes T$ is $\rho$, and that
it is compatible with both unary operations.
It remains to check that $\rho_e$ is compatible with the multiplication.
We have to verify that if $(a,t)\,\rho\,(b,u)$ in $Y\rtimes T$ and $v\in T$ then
both $(a,t)(e,v)\,\rho\,(b,u)(e,v)$ and $(e,v)(a,t)\,\rho\,(e,v)(b,u)$ hold,
that is, both $(a,tv)\,\rho\,(b,uv)$ and $(v\cdot a,vt)\,\rho\,(v\cdot b,vu)$ are valid.
We verify the first relation, the second being similar.
Since $\rho$ is a $(2,1,1)$-congruence on $Y\rtimes T$, we have 
$(a\wedge t\cdot c,tv)=(a,t)(c,v)\,\rho\,(b,u)(c,v)=(b\wedge u\cdot c,uv)$ 
for every $c\in Y$.
Choosing $c$ to be the unique element of $Y$ such that $t\cdot c=a$, we obtain that 
$(a,tv)\rho=(b\wedge u\cdot c,uv)\rho\le (b,uv)\rho$ in $F^-$.
Changing the roles of $(a,t)$ and $(b,u)$ the equality follows, proving the first relation.
\end{proof}

\subsection{Congruences on restriction semigroups generated by relations} \cite{M2013}\quad
Given a set $X$ of variables, by a {\em (restriction) term in $X$} we mean a formal expression built up from the elements of $X$ by means of
the operational symbols --- the binary operational symbol $\cdot$ and the unary operational symbols ${}^+$ and ${}^*$ --- in finitely
many steps. Since the binary operation $\cdot$ is always interpreted in a semigroup we delete the unnecessary parentheses from the terms. 
If $S$ is a fixed restriction semigroup then 
we introduce a nullary operational symbol for every element $s$ in $S$, and, for simplicity, denote it also by $s$. 
By a {\em polynomial of $S$} we mean an expression obtained in a way similar to terms, 
but from variables and these nullary operational symbols.
A polynomial can also be interpreted in the way that such nullary operational symbols --- briefly elements of $S$ --- are substituted for certain
variables in a term.
So a {\em unary polynomial of $S$}, that is, a polynomial in one variable $x$, is of the form $\t(x,s_1,\ldots,s_{k-1})$ for some
term $\t$ in $k$ variables and for some elements $s_1,\ldots,s_{k-1}\in S$.
To simplify our notation, we denote by $S^\star$ the set of finite sequences of elements of $S$, and we use Greek letters to denote elements of $S^\star$.
The length of the sequences will be determined by the context where they appear.
For example, if $\t$ is a term in $k$ variables and 
we are talking about $\t(x,\alpha)$ for some $\alpha\in S^\star$ then the length of $\alpha$ is $k-1$.

Let $S$ be a restriction semigroup and let $\tau \subseteq S \times S$ be a symmetric relation.
We denote by $\tau^\#$ the $(2,1,1)$-congruence on $S$ generated by $\tau$.
It is a basic fact of universal algebra that if $s,t \in S$ then $s \ \tau^\#\ t$ if and only if there exist a sequence $p_1(x),p_2(x),\ldots, p_k(x)$ of unary polynomials of $S$ and 
elements $c_1,d_1,\ldots, c_k,d_k \in S$ such that $(c_i,d_i) \in \tau$ for all $i\ (1\leq i\leq k)$ and
\[
s=p_1(c_1), p_1(d_1)=p_2(c_2),\ldots, p_k(d_k)=t.
\]
In general, unary polynomials over a restriction semigroup can be quite complicated.
However, it is shown in \cite{M2013} that the unary polynomials appearing in the sequence can be chosen to be simpler, and this will be key to our later arguments.

We define two sequences of terms in variables $x,y,z,y_0,z_0,\ldots$ in the following way: let
\begin{equation*}
\t _+^{(0)}(x,y_0,z_0)=(y_0xz_0)^+,\quad
\t _*^{(0)}(x,y_0,z_0)=(y_0xz_0)^*,
\end{equation*}
and, for every $i\in\NN$, let
\begin{eqnarray*}
\t _+^{(i)}(x,y_0,z_0,\ldots,y_{i-2},z_{i-1},y_i)&\!\!\!=\!\!\!&\big(y_i\t _*^{(i-1)}(x,y_0,z_0,\ldots,y_{i-2},z_{i-1})\big)^+,\\
\t _*^{(i)}(x,y_0,z_0,\ldots,z_{i-2},y_{i-1},z_i)&\!\!\!=\!\!\!&\big(\t _+^{(i-1)}(x,y_0,z_0,\ldots,z_{i-2},y_{i-1})z_i\big)^*.
\end{eqnarray*}
For convenience we note that
\[
\begin{array}{c}
\t_+^{(1)}(x,y_0,z_0,y_1) = \big(y_1(y_0xz_0)^*\big)^+, \\
\t_*^{(1)}(x,y_0,z_0,z_1) = \big((y_0xz_0)^+ z_1\big)^*, \\
\t_+^{(2)}(x,y_0,z_0,z_1,y_2) = \big(y_2\big((y_0xz_0)^+ z_1\big)^*\big)^+, \\
\t_*^{(2)}(x,y_0,z_0,y_1,z_2) = \big(\big(y_1(y_0xz_0)^*\big)^+z_2\big)^*.
\end{array}
\]
We define the following sets of terms:
\[
\mathbf{T}_{(+)}=\{\t^{(i)}_+,\t^{(i+1)}_*:i\in \mathbb{N}_0 \text{ is even}\},\ 
\mathbf{T}_{(*)}=\{\t^{(i)}_*,\t^{(i+1)}_+: i \in \mathbb{N}_0 \text{ is even}\},
\]
\[
\mathbf{T}=\{ \t: \t(x,y,z)=yxz \text{ or } \t(x,y,z,y_0,z_0,\ldots)=y\u z \text{ where } \u\in \mathbf{T}_{(+)} \cup \mathbf{T}_{(*)}\}.
\]
Notice that $\mathbf{T}_{(+)}$ ($\mathbf{T}_{(*)}$) contains terms with $+$ ($*$) as innermost unary operation.
The set $\mathbf{T}_{(+)}$ ($\mathbf{T}_{(*)}$) differs from  the set $\mathbf{T}_+$ ($\mathbf{T}_*$) used in \cite{M2013}
since the latter consists of the terms with $+$ ($*$) as outermost unary operation. 
The reason for this modification (and slightly awkward notation) is that, in contrast to the proof of the main result of \cite{M2013}, 
we are going to distinguish cases depending on the innermost unary operation. 

It is shown in \cite{M2013} that the terms in $\mathbf{T}$ are enough to determine a $(2,1,1)$-congruence generated by a symmetric relation.

\begin{Prop} \cite[Proposition 4.5]{M2013}
If $\tau$ is a symmetric relation on a restriction semigroup $S$ then, for any $s,t \in S$, we have $s\ \tau^\#\ t$ if and only if $s=t$ or 
there exist $k \in \mathbb{N}$, $\t_1,\ldots, \t_k \in \mathbf{T}$, $\alpha_1,\ldots,\alpha_k \in S^*$ and $(c_1,d_1), \ldots, (c_k,d_k) \in \tau$ such that
\[
s=\t_1(c_1,\alpha_1), \t_1(d_1,\alpha_1)=\t_2(c_2,\alpha_2), \ldots, \t_k(d_k,\alpha_k)=t.
\]
\end{Prop}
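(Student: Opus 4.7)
The ``if'' direction is immediate from the general universal-algebraic characterization of $\tau^\#$ recalled above, since every $\t \in \mathbf{T}$ defines a unary polynomial upon substituting constants for all but one variable. I focus on the converse. By the same classical fact, $s\ \tau^\#\ t$ with $s \ne t$ is witnessed by \emph{some} chain $s = p_1(c_1), p_1(d_1) = p_2(c_2), \ldots, p_k(d_k) = t$ in which the $p_i$ are arbitrary unary polynomials. The strategy is to refine this chain so that each polynomial comes from a term in $\mathbf{T}$. For this it suffices to prove a \emph{refinement lemma}: for every unary polynomial $p(x)$ of $S$ and every $(c,d) \in \tau$, the transition $p(c) \to p(d)$ can be realised as a finite chain of steps, each of the form $\t(c,\alpha) \to \t(d,\alpha)$ with $\t \in \mathbf{T}$ and $\alpha \in S^\star$.

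The refinement lemma would be proved by induction on the structural complexity of $p(x)$. The base case $p(x) = x$ is handled by the term $yxz \in \mathbf{T}$ with $y = z \in P(S)$ an appropriately chosen projection (for instance $(cd)^+$ and its dual), so that $p(c) = yxz\big|_{x=c}$; if $S$ has no convenient projection one can adjoin an identity as in Lemma~\ref{new}. The inductive step splits according to how $p$ is built from a subpolynomial $q$: either $p(x) = a\cdot q(x)$ or $p(x) = q(x)\cdot b$ for some $a,b \in S$, or $p(x) = q(x)^+$, or $p(x) = q(x)^*$. In the multiplicative case the new outer factor is absorbed into the $y$ or $z$ slot of the $\mathbf{T}$-terms supplied by the induction hypothesis, keeping the refined chain inside $\mathbf{T}$. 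In the unary case one must ``wrap'' each step $\t(c,\alpha) \to \t(d,\alpha)$ in the given chain by $^+$ (respectively $^*$) and show that the wrapped step can itself be realised by a short chain of $\mathbf{T}$-steps.

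The technical heart is therefore a \emph{wrapping lemma}: for every $\t \in \mathbf{T}$, every parameter $\alpha$, and every $(c,d) \in \tau$, both $\t(c,\alpha)^+ \to \t(d,\alpha)^+$ and $\t(c,\alpha)^* \to \t(d,\alpha)^*$ decompose into $\mathbf{T}$-steps. This uses the restriction-semigroup identities intensively: projection commutation $x^+ y^+ = y^+ x^+$, the shift identity $xy^+ = (xy)^+ x$ and its dual, idempotency $(x^+)^+ = x^+$, $(x^*)^* = x^*$, and the swaps $(x^+)^* = x^+$, $(x^*)^+ = x^*$ from~\eqref{id-rp}. These identities are precisely what justifies the alternating structure of $\t_+^{(i)}, \t_*^{(i)}$: applying an outer $^+$ to a term whose outermost unary operation is $^*$ increments the superscript $i$ and switches the subscript, while applying a unary operation that matches the outermost one is absorbed by idempotency. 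The outer factors $y$ and $z$ present in every element of $\mathbf{T}$ are moved across the new unary operation via the shift identities, possibly introducing one or two auxiliary intermediate steps inside the refined chain.

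The main obstacle I expect is the bookkeeping in the wrapping lemma: one must handle every shape of $\t \in \mathbf{T}$ (namely $yxz$, $y\u z$ with $\u \in \mathbf{T}_{(+)}$, and $y\u z$ with $\u \in \mathbf{T}_{(*)}$) and every combination with the new outer $^+$ or $^*$, verifying uniformly that the resulting polynomial again lies in (or decomposes into) $\mathbf{T}$. The interaction between the outer multiplications $y,z$ and the newly applied unary operation is delicate because $(y\u z)^+$ is not in general equal to $y\u^+ z$, so the shift identities must be used to peel $y$ and $z$ off under the projection; this is where auxiliary intermediate $\mathbf{T}$-steps enter. Once this case analysis is carried out, the induction closes mechanically, and the claimed characterisation of $\tau^\#$ follows.
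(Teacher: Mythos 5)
First, a point of comparison: the paper offers no proof of this proposition --- it is imported verbatim from \cite[Proposition 4.5]{M2013} --- so there is no internal argument to measure yours against, and I can only judge the sketch on its own terms. Your overall strategy (start from the universal-algebraic chain of arbitrary unary polynomials and refine each step into $\mathbf{T}$-steps by structural induction) is the natural one and is in the spirit of the cited source, but as written it has a concrete gap at the very first step, and the technical core is announced rather than carried out.

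The base case $p(x)=x$, i.e.\ the claim that a pair $(c,d)\in\tau$ is itself joined by a $\mathbf{T}$-chain, does not work as you describe. Your proposed constants fail outright: $(cd)^+\le c^+$, so $(cd)^+\,c\,(cd)^*\le c$ with equality only in special cases, hence $p(c)\ne\t(c,\alpha)$ for that choice. More seriously, no single choice of $y,z\in S$ can make $yxz$ realise the step $c\to d$, since one needs $ycz=c$ \emph{and} $ydz=d$ simultaneously; taking $y=c^+$, $z=c^*$ gives $ycz=c$ but the step lands at $c^+dc^*$, which is merely $\le d$, and you supply no chain from there up to $d$. The fallback of adjoining an identity is not available either: a $\mathbf{T}$-chain in $S^1$ puts the constant $1\notin S$ into the $y,z$ slots, so it is not a chain with $\alpha_i\in S^\star$ as the statement requires (and the application in Lemma~\ref{Main1} depends precisely on the constants lying in the given semigroup so that $\da$ can be applied to them). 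The same difficulty --- constants that work at $c$ need not work at $d$ --- recurs throughout your ``wrapping lemma'', which you yourself flag as an expected obstacle rather than a completed argument. For instance, $(y\u z)^+$ is a projection $e$ and must be re-expressed as $e\cdot\u'\cdot e$ for a suitable tower term $\u'$; evaluating at $c$ gives $e$ but evaluating at $d$ gives $e\wedge(y\u(d,\cdot)z)^+$, so the wrapped transition is typically realised only as two $\mathbf{T}$-steps meeting at this intermediate meet, one issued from each endpoint via the symmetry of $\tau$. Until the base case and the wrapping step are exhibited as explicit finite chains of this kind, the induction does not close, and the proposal remains an outline rather than a proof.
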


Let $S,S'$ be restriction semigroups such that $S$ is a $(2,1,1)$-subsemigroup of $S'$, and let $\rho$ be
$(2,1,1)$-congruence on $S$.
A $(2,1,1)$-congruence $\rho'$ on $S'$ is said to {\em extend} $\rho$ if the restriction of $\rho'$ to $S$ equals $\rho$.
We say that $\rho$ {\em extends to $S'$} if there exists a $(2,1,1)$-congruence $\rho'$ on $S'$ extending $\rho$.
Notice that $\rho$ extends to $S'$ if and only if the $(2,1,1)$-congruence on $S'$ generated by $\rho$ extends $\rho$.

\subsection{Free restriction monoids}\label{free-restr} \cite{free-amp}\quad
A transparent model of the free restriction monoid on a set is given in \cite{free-amp} 
as a full subsemigroup in the free inverse monoid on the same set  (cf.\ \cite{L=8}). 

Let $\Omega$ be a non-empty set, and denote by $\Omega^*$ and $\FG$, respectively, 
the usual models of the free monoid and the free group on $\Omega$ where $\Omega^*$ consists of all words 
(i.e., of finite sequences) over $\Omega$ and $\FG$ consists of all reduced words over $\Omega\cup\Omega^{-1}$.
The empty word is denoted by $1$.
Obviously, $\Omega^*$ is a subsemigroup of $\FG$, and $1$ is the identity element in both. 

Let $\C$ be the Cayley graph of $\FG$ which is well known to be a tree.
Therefore each finite subset $A \subseteq \FG$ determines a maximum subgraph of $\C$ having $A$ as its set of vertices.
From now on, we identify $A$ with this subgraph.
Let $\X$ be the set of all finite connected subgraphs of $\C$ (i.e., 
of all finite subtrees), partially ordered by reverse inclusion.
Then $\X$ is a semilattice without an identity.
Clearly, $\Y=\{A\in\X : 1\in A\}$ is a principal order ideal of $\X$ and a subsemilattice with identity element $\1=\{1\}$.
Moreover, $(\FG,\X,\Y)$ is a McAlister triple where $\FG$ acts on the partially ordered set $\X$ by multiplication:
if $g \in \FG$ and $A \in \X$ then $g\cdot A=\{ga: a \in A\}$.
It is well known that the $P$-semigroup $P(\FG,\X,\Y)$ is a free inverse monoid on $\Omega$.

It is established in \cite{free-amp} that 
\[
\FR=\{(A,\ol{a})\in P(\FG,\X,\Y): \ol{a} \in \Omega^*\}=\{(A,\ol{a}) \in \Y \times \Omega^*: A \supseteq \ol{a} \cdot \1\}
\]
is a $(2,1,1)$-subsemigroup in $P(\FG,\X,\Y)$, it is a free restriction monoid on $\Omega$
and $\FRS=\FR\setminus\{(\1,1)\}$ is a free restriction semigroup on $\Omega$.

Since the free inverse monoid $P(\FG,\X,\Y)$ is $F$-inverse, the partially ordered set $\X$, where the partial order will subsequently be denoted $\le$, 
is a semilattice.
Thus $\Y$ is a principal ideal in it with identity element $\1$, 
and the action of $\FG$ on the partially ordered set $\X$ is, in fact, an action on the semilattice $\X$.
Since $\C$ is a tree, for every finite subgraph $A \subseteq \FG$, there exists a minimum subtree $A'$ containing $A$.
The semilattice operation $\we$ of $\X$ can be given by the rule 
$ A\we B=(A \cup B)'\ (A,B \in \X)$.
Thus the semidirect products $\X\rtimes\FG$ and $\X\rtimes\Omega^*$ are defined, and $\FR$ can be given in this context as
\[
\FR=\{(A,\ol{a})\in \X \rtimes \Omega^*: A \in \Y,\ \ol{a} \in A\}=\{(A,\ol{a}) \in \Y \times \Omega^*: A \leq \ol{a} \cdot \1\}.
\]

Note that the free inverse monoid $P(\FG,\X,\Y)$ is usually considered as an inverse subsemigroup of a semidirect product of
another semilattice by $\FG$, namely, of the semilattice of all finite subgraphs of $\C$ with respect to the usual join.
In the sequel, we need the approach in the previous paragraph, as it was the case with the $W$-product applied in \cite{M2012} and \cite{M2013}.

\section{Main result}

Our aim is to show that every restriction monoid, and consequently, every restriction semigroup is $(2,1,1)$-embeddable in a factorisable restriction monoid, or equivalently in an almost factorisable restriction semigroup.
We obtain a $(2,1,1)$-embedding by taking a free restriction monoid $\FR$, having a given restriction monoid $S$ as its image via a $(2,1,1)$-morphism, say, $\varphi$, 
and by considering the $(2,1,1)$-embedding of
$\FR$ into the semidirect product $\X \rtimes \Omega^*$ 
described in the previous section where the monoid $\Omega^*$ acts on the semilattice $\X$ by automorphisms.
The kernel of $\varphi\colon \FR\to S$ is a $(2,1,1)$-congruence on $\FR$, we consider the $(2,1,1)$-congruence on the semidirect product $\X \rtimes \Omega^*$ it generates.
If the restriction of the latter congruence to $\FR$ equals the kernel, then we obtain 
a $(2,1,1)$-embedding of $S$ into a $(2,1,1)$-morphic image of the semidirect product, 
that is, into an almost factorisable restriction semigroup.
As it turns out, the fact that this approach works depends only on the fact that the actions of the free monoid $\Omega^*$ and 
of the free group $\FG$ on the semilattice $\X$ satisfy a simple condition.
This allows us to slightly generalise our approach.

From now on, let $\X$ be a semilattice and let $\Y$ be a principal ideal of $\X$.
Denote the identity element of $\Y$ by $\1$.
Let $T$ be a monoid embeddable in a group, and suppose that $T$ acts on $\X$ (on the left) by automorphisms.
We have seen formerly that in this case, there exists a group $G$ acting on $\X$ such that $T$ is a submonoid of $G$ generating $G$, 
and the action of $T$ is the restriction of the action of $G$.
Let us choose and fix such a group $G$.
In the sequel we will use boldface letters to represent the elements of the semidirect products $\X\rtimes G$ and $\X\rtimes T$, 
the corresponding capital letters and overlined letters to denote their first and second components, respectively 
--- for example, $\a=(A,\ol{a})$.

Define the following subset of the semidirect product $\X \rtimes T$:
\begin{equation}\label{R}
R=\{(A,\ol{a}) \in \Y \times T: A \leq \ol{a}\cdot \1\}.
\end{equation}
Note that if $(A,\ol{a})\in R$ and $A=\ol{a}\cdot B\le \ol{a}\cdot \1$ where $B\in\X$, then
as $T$ acts by automorphisms we have $B\le \1$.
It is now easy to check that $R$ is a $(2,1,1)$-subsemigroup in $\X \rtimes T$ with identity element $(\1,1)$, and so it is a proper restriction monoid.
Notice that, for any $(A,\ol{a}) \in \X \rtimes T$ there exists a maximum element of $R$ less than or equal to $(A,\ol{a})$ which we denote by $(A,\ol{a})\da$.
It is clear that $(A,\ol{a})\da=(\1\we A\we \ol{a}\cdot \1,\ol{a})$.

Observe that $G\cdot \Y$ is the smallest subsemilattice in $\X$ 
containing $\Y$ which is invariant under the action of $G$.
Since, for any $g,h\in G$ and $A,B\in\Y$, we have 
$g\cdot A\wedge h\cdot B=g\cdot(A\wedge g^{-1}h\cdot B)$, and $\Y$ is an ideal in $\X$,
we see that $G\cdot\Y$ is, indeed, a subsemilattice of $\X$.
Since we have $R\subseteq \Y\times T$, we can assume without loss of generality that $\X=G\cdot \Y$.

A sequence $w_1,w_2,\ldots,w_n$ of elements of $G$ is called {\em an alternating sequence in $T$} if, for every $i\ (1\leq i\leq n)$, 
we have $w_i=t_i^{\epsilon_i}$ for some $t_i\in T,\ \epsilon_i\in\{1,-1\}$, and, for every $i,j\ (1\leq i,j\leq n)$, 
we have $\epsilon_i=\epsilon_j$ if and only if $2\mid i-j$.
A factorisation $g=w_1w_2\cdots w_n$ of an element $g\in G$ is 
called {\em a nice factorisation in $T$ with respect to $\1$} if $w_1,w_2,\ldots,w_n$ is 
an alternating sequence in $T$ such that, for every $i\ (1\leq i < n)$, we have that
\[
w_i\cdot \1 \geq \1 \we w_i\cdots w_n\cdot \1.
\]
We say that the action of the monoid $T$ on the semilattice $\X$ (necessarily by automorphisms) is {\em nice 
over the group $G$ with respect to the element $\1\in\X$} 
if every element $g\in G$ has a nice
factorisation in $T$ with respect to $\1$.
Note that, in particular, the action of a group on a semilattice is always nice 
over the group itself with respect to any element of the semilattice.
We say that $\X\rtimes T$ is a 
{\em nice
semidirect product of a semilattice $\X$ by a monoid $T$} 
if $T$ is embeddable in a group and acts on $\X$ by automorphisms, $G$ is a group having $T$ as a submonoid and acting on $\X$ such that
the action of $T$ is the restriction of the action of $G$, and is nice
over $G$ with respect to an element $\1\in\X$ such that $\X=G\cdot \Y$ 
for the principal ideal $\Y$ of $\X$ with identity $\1$.

Let $G$ be a group acting on a semilattice $\X$, and let $\varepsilon$ be a congruence on $\X$.
We say that $\varepsilon$ is {\em $G$-invariant} if $x\;\varepsilon\;x'$ implies $g\cdot x\;\varepsilon\;g\cdot x'$
for every $x,x'\in\X$ and $g\in G$.
In this case, $G$ acts by automorphisms
on $\X/\varepsilon$ by the rule $g\cdot(x\varepsilon)=(g\cdot x)\varepsilon$.
The following observation is straightforward.

\begin{Lem}\label{per-eps}
If an action of a monoid $T$ on a semilattice $\X$ is nice
over a group $G$ with respect to an element $\1\in\X$
and $\varepsilon$ is a $G$-invariant congruence on $\X$
then the restriction of the action of $G$ on $\X/\varepsilon$ to $T$ is also nice 
over $G$ with respect to $\1\varepsilon\in\X/\varepsilon$.
Consequently, if $\X\rtimes T$ is a nice 
semidirect product then the same holds for $(\X/\varepsilon)\rtimes T$.
\end{Lem}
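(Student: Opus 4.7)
The plan is to reuse, for each $g\in G$, any nice factorisation of $g$ in $T$ with respect to $\1$, and verify that the very same factorisation remains nice with respect to $\1\varepsilon$. The key point is that the natural projection $\X\to\X/\varepsilon$ is a semilattice morphism (hence order preserving) and that, by the $G$-invariance of $\varepsilon$, the induced action of $G$ on $\X/\varepsilon$ satisfies $g\cdot(x\varepsilon)=(g\cdot x)\varepsilon$.

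First I would observe that the induced $G$-action on $\X/\varepsilon$ is by automorphisms (immediate from the quotient construction and the hypothesis on $G$), and that its restriction to $T$ coincides with the action obtained by factoring the given $T$-action through $\varepsilon$. Then, given $g\in G$ with nice factorisation $g=w_1w_2\cdots w_n$ in $T$ with respect to $\1$, the sequence $w_1,\ldots,w_n$ remains alternating in $T$, since being alternating is a property of the elements themselves and is preserved under passing to any action. For each $i<n$ I would start from
\[
w_i\cdot\1\ge \1\we (w_i\cdots w_n)\cdot\1
\]
in $\X$ and push it forward under the projection. Since $\varepsilon$ respects $\we$ (and therefore also the induced order: $a\ge b$ iff $a\we b=b$), and since $(h\cdot x)\varepsilon=h\cdot(x\varepsilon)$ for every $h\in G$ and $x\in\X$, the inequality becomes $w_i\cdot(\1\varepsilon)\ge (\1\varepsilon)\we (w_i\cdots w_n)\cdot(\1\varepsilon)$ in $\X/\varepsilon$, which is exactly what is required.

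For the consequence, I would check the remaining defining clauses of a nice semidirect product for $(\X/\varepsilon)\rtimes T$. The monoid $T$ is still embeddable in $G$, the group $G$ still contains $T$ as a generating submonoid and acts on $\X/\varepsilon$ so that the action of $T$ is the restriction, and $\X/\varepsilon=G\cdot\Y'$ with $\Y'=\{y\varepsilon:y\in\Y\}$ being the principal ideal of $\X/\varepsilon$ with identity $\1\varepsilon$. The last equality follows at once from $\X=G\cdot\Y$ together with the fact that $y\varepsilon\le\1\varepsilon$ whenever $y\in\Y$.

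No genuine difficulty is to be expected: as the author signals by calling the observation straightforward, everything reduces to the compatibility of the projection $\X\to\X/\varepsilon$ with $\we$ and with the $G$-action, plus a direct inspection of the defining conditions of a nice semidirect product.
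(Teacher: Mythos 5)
Your proof is correct and is exactly the ``straightforward'' argument the paper has in mind (the paper gives no written proof, only the remark that the observation is straightforward): reuse the same nice factorisation and push the defining inequalities through the projection $\X\to\X/\varepsilon$, which is a $G$-equivariant semilattice morphism, then verify the remaining clauses of the definition directly. No gaps.
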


\begin{Ex}\label{Ex1}
The action of $\Omega^*$ on the semilattice $\X$ defined in Subsection \ref{free-restr} is 
 nice  
over $\FG$ with respect to $\1$.
It is easy to see that if $g \in \FG$, then the unique reduced alternating factorisation $g=w_1w_2\cdots w_n$ where
$w_1,w_2,\ldots,w_n\in\Omega^*\cup(\Omega^*)^{-1}$ is a 
 nice  
factorisation of $g$ in $\Omega^*$ with respect to $\1$.
Moreover, we have $\X=\FG\cdot \Y$ where $\Y$ is a principal ideal with identity $\1$, and so
$\X\rtimes \Omega^*$ is a 
 nice  
semidirect product.
\end{Ex}

\begin{Ex}
Let $G$ and $T$ be the free Abelian group and free commutative semigroup, respectively, on the set $\Omega$.
Then for every $g \in G$, there exist unique elements $u,t \in T$ such that $g=u^{-1}t$.
We say that a subset $A \subseteq G$ is {\em min-closed} if, for every $g=\omega_1^{a_1} \cdots \omega_s^{a_s},\ h=\omega_1^{b_1} \cdots \omega_s^{b_s} \in A$,
where $\omega_1,\ldots,\omega_s$ are pairwise distinct elements of $\Omega$,
we have that $\omega_1^{\text{min}(a_1,b_1)}\cdots \omega_s^{\text{min}(a_s,b_s)} \in A$.
Let $\X=\{A \subseteq G: A \neq \emptyset,\ A \text{ is min-closed}\}$ and, for all $A,B \in \X$, let $A \we B$ be the smallest min-closed subset of $G$ 
containing $A \cup B$.
Then $\X$ is a semilattice and $G$ acts on $\X$ by multiplication.
Furthermore, $\Y=\{A \in \X: 1 \in A\}$ is a principal ideal of $\X$ with identity $\{1\}=\1$.
Note that, for any $g \in G$, the unique factorisation $g=u^{-1}t$ is 
 nice  
in $T$ with respect to $\1$, because if $g=\omega_1^{a_1}\cdots \omega_s^{a_s}$ then 
$u^{-1}=\omega_1^{\text{min}(a_1,0)} \cdots \omega_s^{\text{min}(a_s,0)} \in \{1\} \we \{g\}$, showing that $u^{-1} \cdot \1 \geq \1 \we g \cdot \1$.
\end{Ex}

From now on, additionally to our former assumptions on $\X$, $\Y$, $T$ and $G$, we suppose that the action of $T$ on $\X$ is 
 nice  
over the group $G$ with respect to $\1$.

\begin{Lem} \label{Onedir}
Let $\t^{(i)} \in \mathbf{T}_{(+)} \cup \mathbf{T}_{(*)}$ and let $\alpha \in (\X \rtimes T)^\star$.
Then there exist $U,V \in \X$, $g \in G$ depending on $\alpha$ such that
\[
\t^{(i)}(\c,\alpha)=(U \we g\cdot \tilde{C} \we g\tilde{c} \cdot V,1)
\]
for every $\c \in R$ where
\[
\tilde{\c}=(\tilde{C},\tilde{c})=\left\{ \begin{array}{ll}
(C,\ol{c}) & \text{if } \t^{(i)} \in \mathbf{T}_{(+)}, \\
(c^{-1}\cdot C,\ol{c}^{-1}) & \text{if } \t^{(i)} \in\mathbf{T}_{(*)}.
\end{array}\right.
\]
\end{Lem}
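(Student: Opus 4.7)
The approach is induction on $i$, treating $\t_+^{(i)}$ and $\t_*^{(i)}$ simultaneously. For the base case $i=0$, direct calculation with the semidirect-product operations suffices. Writing $\y_0=(Y_0,\ol{y}_0)$, $\z_0=(Z_0,\ol{z}_0)$ and $\c=(C,\ol{c})\in R$, we have
\[
\y_0\c\z_0=(Y_0\we \ol{y}_0\cdot C\we \ol{y}_0\ol{c}\cdot Z_0,\ \ol{y}_0\ol{c}\ol{z}_0).
\]
For $\t_+^{(0)}\in \mathbf{T}_{(+)}$ one reads off $U=Y_0$, $g=\ol{y}_0$, $V=Z_0$ with $\tilde{\c}=(C,\ol{c})$. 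For $\t_*^{(0)}\in \mathbf{T}_{(*)}$, applying $(\ol{y}_0\ol{c}\ol{z}_0)^{-1}$ to the meet via the $G$-action (and using that $G$ acts by automorphisms, so it distributes) recasts the result in the required form with $U=\ol{z}_0^{-1}\cdot Z_0$, $g=\ol{z}_0^{-1}$, $V=\ol{y}_0^{-1}\cdot Y_0$ and $\tilde{\c}=(\ol{c}^{-1}\cdot C,\ol{c}^{-1})$. Note that $U,V\in\X$ because $\X=G\cdot \Y$ is $G$-invariant, and $g\in G$ does not depend on $\c$.

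For the inductive step, I would consider separately the cases $\t_+^{(i)}=(y_i\t_*^{(i-1)})^+$ and $\t_*^{(i)}=(\t_+^{(i-1)}z_i)^*$, each giving two subcases depending on whether the class is $\mathbf{T}_{(+)}$ or $\mathbf{T}_{(*)}$. The decisive observation is that in every case the inner subterm lies in the \emph{same} class as the outer term, because the innermost unary operation is preserved by the recursion; hence the $\tilde\c$-form supplied by the inductive hypothesis matches exactly the $\tilde\c$-form demanded in the conclusion. Applying the hypothesis, the computation reduces to two simple manoeuvres on the expression $(U'\we g'\cdot\tilde C\we g'\tilde c\cdot V',1)$: left multiplication by $\y_i=(Y_i,\ol{y}_i)$ followed by $^+$ produces $U=Y_i\we \ol{y}_i\cdot U'$, $g=\ol{y}_ig'$, $V=V'$; right multiplication by $\z_i=(Z_i,\ol{z}_i)$ followed by $^*$ produces $U=\ol{z}_i^{-1}\cdot(U'\we Z_i)$, $g=\ol{z}_i^{-1}g'$, $V=V'$. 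In both cases $U,V\in\X$ and $g\in G$ are independent of $\c$, because the action of $G$ distributes over $\we$ and preserves $\X$.

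The main obstacle is the bookkeeping between the $\mathbf{T}_{(+)}$ and $\mathbf{T}_{(*)}$ cases, since the formula for $\tilde\c$ differs by the factor $\ol{c}^{-1}$ attached to both $C$ and $V$. However, because the class of the subterm always matches that of the outer term, this $\ol{c}^{\pm 1}$ from the inductive hypothesis sits in precisely the same position in both formulas and needs no repositioning; the only change from hypothesis to conclusion is the simple replacement $g'\mapsto \ol{y}_ig'$ or $g'\mapsto \ol{z}_i^{-1}g'$ together with the adjustment of $U'$ described above. This makes the four subcases genuinely parallel and the induction goes through without case-specific complications beyond routine algebra with the semidirect-product operations.
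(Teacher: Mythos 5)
Your proof is correct and follows essentially the same route as the paper's: induction on $i$, with the base case computed directly from the semidirect-product operations and the inductive step reduced to left multiplication plus $^+$ or right multiplication plus $^*$, the key point in both being that membership in $\mathbf{T}_{(+)}$ versus $\mathbf{T}_{(*)}$ is determined by the innermost unary operation and hence is inherited by the subterm, so the $\tilde{\c}$-form passes through unchanged. Your explicit parameter choices agree with the paper's (up to renaming the constants).
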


\begin{proof}
We proceed by induction on $i$. 
If $i=0$ then $\t^{(i)}=(\y_0\x\z_0)^+$ or $\t^{(i)}=(\y_0\x\z_0)^*$, and
$\alpha=\big( (A,\ol{a}),(B,\ol{b})\big)$ for some $(A,\ol{a}),(B,\ol{b}) \in \X \rtimes T$.
If $\t^{(i)} \in \mathbf{T}_{(+)}$ then
\[
\t^{(i)}(\c,\alpha)=\big( (A,\ol{a})(C,\ol{c})(B,\ol{b})\big)^+=(A\ \we\ \ol{a}\cdot C\ \we\ \ol{a}\, \ol{c}\cdot B,1), 
\]
so we can set $U=A,\, V=B$ and $g=\ol{a}$ .
Similarly, if $\t^{(i)} \in \mathbf{T}_{(*)}$ then
\[
\t^{(i)}(\c,\alpha)=\big( (A,\ol{a})(C,\ol{c})(B,\ol{b})\big)^*=(\ol{b}^{-1}\ol{c}^{-1}\ol{a}^{-1} \cdot A \we \ol{b}^{-1}\ol{c}^{-1}\cdot C\we \ol{b}^{-1} \cdot B,1),
\]
so we can set $U=\ol{b}^{-1}\cdot B,\, V=\ol{a}^{-1}\cdot A$ and $g=\ol{b}^{-1}$.

Let $i\geq 1$ and $\t^{(i)} \in \mathbf{T}_{(+)} \cup \mathbf{T}_{(*)}$, and suppose that we have proven the statement for $i-1$.
Then $\t^{(i)}=\big( y_i \t^{(i-1)} \big)^+$ or $\t^{(i)}=\big( \t^{(i-1)} z_i \big)^*$ for an appropriate $\t^{(i-1)} \in \mathbf{T}_{(+)} \cup \mathbf{T}_{(*)}$ and 
$\alpha=\big(\alpha', (A,\ol{a})\big)$ where $\alpha' \in (\X \rtimes T)^\star$ and $(A,\ol{a}) \in \X \rtimes T$.
Note that $\t^{(i)} \in \mathbf{T}_{(+)}$ if and only if $\t^{(i-1)} \in \mathbf{T}_{(+)}$.
For, $\mathbf{T}_{(+)}$ contains terms with $+$ as innermost unary operation.
By the induction hypothesis there exist $U',V' \in \X$ and $g' \in G$ such that $\t^{(i-1)}(\c,\alpha')=(U' \we g'\cdot \tilde{C} \we g'\tilde{c} \cdot V',1)$ for all $\c \in R$.
If $\t^{(i)}=\big( y_i \t^{(i-1)}\big)^+$ then
\[
\t^{(i)}(\c,\alpha)=\big( (A,\ol{a}) (U'\we g'\cdot \tilde{C}\we g'\tilde{c} \cdot V',1)\big)^+=(A\we \ol{a} \cdot U' \we \ol{a}g'\cdot \tilde{C} \we \ol{a}g'\tilde{c}\cdot V',1),
\]
so we can set $U=A \we \ol{a}\cdot U', V=V'$ and $g=\ol{a}g'$.
Similarly, if $\t^{(i)}=\big( \t^{(i-1)} z_i\big)^*$ then
\[
\t^{(i)}(\c,\alpha)
=\big( (U' \we g' \cdot \tilde{C}\we g'\tilde{c} \cdot V',1)(A,\ol{a})\big)^*
=(\ol{a}^{-1}\cdot U'\we \ol{a}^{-1}g'\cdot \tilde{C}\we \ol{a}^{-1}g'\tilde{c} \cdot V' \we \ol{a}^{-1}\cdot A,1),
\]
so we can set $U=\ol{a}^{-1}\cdot U' \we \ol{a}^{-1}\cdot A, V=V'$ and $g=\ol{a}^{-1}g'$.
\end{proof}

The previous lemma showed that for all terms $\t \in \mathbf{T}_{(+)} \cup \mathbf{T}_{(*)}$, the elements $\t(\c,\alpha)$ ($\alpha \in (\X  \rtimes T)^\star$) are of special form.
Our aim is to replace in certain circumstances the sequence of constants $\alpha$ by $\beta \in R^\star$ and the term $\t$ by a term $\t'$ such that $\t(\c,\alpha)\da=\t'(\c,\beta)$ for all $\c \in R$.
The following lemma constitutes the first step in this direction.

\begin{Lem}\label{Yuck}
Let $U,V \in \X$ and $g \in G$.
Then there exist $\t \in \mathbf{T}_{(+)}$ and $\beta \in R^\star$ such that 
\[
(\1 \we U\we g\cdot C \we g\ol{c}\cdot V,1)=\t(\c,\beta)
\]
for all $\c \in R$.
Dually, there exists $\t \in \mathbf{T}_{(*)}$ and $\beta \in R^\star$ such that
\[
(\1 \we U\we g\ol{c}^{-1}\cdot C \we g\ol{c}^{-1}\cdot V,1)=\t(\c,\beta)
\]
for all $\c \in R$.
\end{Lem}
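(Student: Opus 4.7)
The plan is to induct on the length $n$ of a nice factorisation $g=w_1 w_2\cdots w_n$ in $T$ with respect to $\1$, which exists by the standing assumption on the action; the dual assertion then follows by a symmetric argument with $\mathbf{T}_{(*)}$.

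For the base case $n=1$ I treat the two subcases separately. If $w_1=\ol{a}\in T$, take $\t=\t^{(0)}_+(x,y_0,z_0)=(y_0 x z_0)^+$ and substitute $\y_0=(\1\we U\we \ol{a}\cdot\1,\ol{a})$ together with $\z_0=(\1\we V,1)$, both of which lie in $R$ since their first coordinates are in $\Y$ and at most the respective $\ol{a}\cdot\1$ and $\1$. Expanding $\t(\c,(\y_0,\z_0))$ yields the meet $\1\we U\we \ol{a}\cdot\1\we\ol{a}\cdot C\we\ol{a}\ol{c}\cdot\1\we\ol{a}\ol{c}\cdot V$, and the stray factors $\ol{a}\cdot\1$ and $\ol{a}\ol{c}\cdot\1$ are absorbed because $C\le\1$ and $C\le\ol{c}\cdot\1$, the latter being the defining condition of $\c\in R$. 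The case $w_1\in T^{-1}$ is handled analogously using $\t^{(1)}_*$.

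In the inductive step I write $g=w_1 g'$ with $g'=w_2\cdots w_n$ and suppose $w_1=\ol{a}\in T$. The tail $w_2\cdots w_n$ inherits the nice factorisation property for $g'$, so the inductive hypothesis applied with $U':=\ol{a}^{-1}\cdot(\1\we U\we\ol{a}\cdot\1)\in\X$, $V':=V$ and $g'$ yields a term $\t'\in\mathbf{T}_{(+)}$ and $\beta'\in R^\star$. I then form $\t:=(y_i\t')^+$ and append the constant $\y_i:=(\1\we\ol{a}\cdot\1,\ol{a})$, which lies in $R$ by construction. A direct evaluation, combined with the identity $\ol{a}\cdot U'=\1\we U\we\ol{a}\cdot\1$, produces the target meet together with a single extra factor $\ol{a}\cdot\1$. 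The case $w_1\in T^{-1}$ is dual, wrapping by $(\t' z_i)^*$. Parity works out automatically: the outer operations of $\t$ and $\t'$ are forced to be opposite, which matches the alternation $w_1, w_2,\ldots$ of the factorisation, and the innermost unary operation stays $+$ throughout, keeping the final term in $\mathbf{T}_{(+)}$.

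The main obstacle is absorbing the stray factor $\ol{a}\cdot\1$ in the inductive step, and this is exactly where the nice factorisation hypothesis is invoked. Since $\ol{a}\cdot\1=w_1\cdot\1\ge\1\we g\cdot\1$ and $C\le\1$ forces $g\cdot C\le g\cdot\1$, the target expression lies below $\ol{a}\cdot\1$, so wedging with $\ol{a}\cdot\1$ leaves it unchanged. A secondary subtlety is that $V\in\X$ need not satisfy $V\le\1$: the remedy is to store $\1\we V\in\Y$ in the innermost $\z_0$-slot and exploit $C\le\ol{c}\cdot\1$ to absorb the resulting stray $\ol{c}\cdot\1$ term. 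This same mechanism, applied dually, handles the second assertion with $\t\in\mathbf{T}_{(*)}$.
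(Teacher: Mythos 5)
Your proposal is correct and follows essentially the same route as the paper: induction on the length of a nice factorisation of $g$, with the base case handled by $\t_+^{(0)}$ (resp.\ $\t_*^{(1)}$) and the inductive step peeling off $w_1$, wrapping with $(y_i\t')^+$ or $(\t'z_i)^*$ according to the sign of $w_1$, and absorbing the stray factor $w_1\cdot\1$ via $w_1\cdot\1\ge\1\we g\cdot\1\ge\1\we g\cdot C$. The only cosmetic difference is that you fold $\1\we U\we\ol{a}\cdot\1$ into $U'$ before translating, where the paper simply takes $U'=w_1^{-1}\cdot U$ and absorbs the extra factor afterwards.
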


\begin{proof}
Let $g=w_1\cdots w_n$ be a 
 nice  
factorisation.
We prove the lemma by induction on $n$.

If $n=1$ then there are four cases to consider (recall the definition of the element $(\tilde{C},\tilde{c})$ from Lemma \ref{Onedir}):
\[
\begin{array}{c|c}
\text{Conditions}& ((\1\we U\we w_1 \cdot \tilde{C} \we w_1\tilde{c}\cdot V,1)= \\
\hline
w_1 \in T,\ \t \in \mathbf{T}_+ & \big((\1\we U\we w_1\cdot \1,w_1)(C,\ol{c})(\1\we V,1)\big)^+  \\
w_1 \in T^{-1},\ \t \in \mathbf{T}_+ & \left( \left( (C,\ol{c})(\1 \we V,1)\right)^+ (\1\we w_1^{-1}\cdot U \we w_1^{-1} \cdot \1,w_1^{-1})\right)^* \\
w_1 \in T,\ \t \in \mathbf{T}_*& \left( (\1 \we U\we w_1\cdot \1,w_1) \big((\1 \we V,1)(C,\ol{c})\big)^* \right)^+ \\
w_1 \in T^{-1},\ \t \in \mathbf{T}_*& \big( (\1 \we V,1) (C,\ol{c}) (\1 \we w_1^{-1}\cdot U \we w_1^{-1}\cdot \1,w_1^{-1})\big)^* 
\end{array}
\]
The equalities are easy to check and left to the reader.
Note that throughout variations of the inequalities $w_1 \cdot \1 \geq w_1\cdot C$ and $c \cdot \1 \geq C$ are used.

Let us suppose that for some $n\geq 2$ we have proven the lemma for every $g' \in G$ which admits a 
 nice  
factorisation having less than $n$ factors, and let us suppose that 
$g=w_1\cdots w_n$ is a 
 nice  
factorisation.
Then $w_1^{-1}g=w_2\cdots w_n$ is also a 
 nice  
factorisation, so applying the induction hypothesis for $U'=w_1^{-1}\cdot U,\, V'=V$ and $g'=w_1^{-1}g$, 
there exists $\t' \in \mathbf{T}_{(+)}$ and $\beta' \in R^\star$ such that
\[
(\1 \we w_1^{-1}\cdot U\we w_1^{-1}g\cdot C\we w_1^{-1}g\ol{c}\cdot V,1)=\t'(\c,\beta')
\]
for all $\c \in R$.
Now there are two cases to consider.
If $w_1 \in T$ then $w_1\cdot \1 \geq \1 \we g\cdot \1 \geq \1 \we g \cdot C$, and so
\begin{eqnarray*}
(\1 \we U\we g\cdot C\we g\ol{c}\cdot V,1)\!\!\!&=&\!\!\!\big((\1\we w_1 \cdot \1,w_1) (\1\we w_1^{-1}\cdot U\we w_1^{-1}g\cdot C\we w_1^{-1}g\ol{c}\cdot V,1)\big)^+\cr
\!\!\!&=&\!\!\!\big((\1 \we w_1\cdot \1,w_1) \t'(\c,\beta')\big)^+=\t(\c,\beta)
\end{eqnarray*}
for $\t=(y_i \t')^+$ and $\beta=\big( \beta',(\1 \we w_1\cdot \1, w_1)\big)$.
If $w_1 \in T^{-1}$ then by the same fact we have
\begin{eqnarray*}
(\1 \we U\we g\cdot C\we g\ol{c}\cdot V,1)\!\!\!&=&\!\!\!\big( (\1\we w_1^{-1}\cdot U\we w_1^{-1}g\cdot C\we w_1^{-1}g\ol{c}\cdot V,1)(\1\we w_1^{-1}\cdot \1,w_1^{-1})\big)^*\cr
\!\!\!&=&\!\!\!\big(\t'(\c,\beta')(\1\we w_1^{-1}\cdot \1,w_1^{-1})\big)^*=\t(\c,\beta)
\end{eqnarray*}
for $\t=( \t' z_i)^*$ and $\beta=\big( \beta', (\1 \we w_1^{-1}\cdot \1, w_1^{-1})\big)$.
To finish this part of the proof, one has to note that the fact that the factorisation $g=w_1\cdots w_n$ is alternating implies that the terms defined above are all contained in 
$\mathbf{T}_{(+)}$.
This proves the first statement of the lemma.
The induction step for the second statement is proven similarly, utilising the fact that $w_1 \cdot \1 \geq \1 \we g\ol{c}^{-1} \cdot C$.
\end{proof}

\begin{Lem}\label{Two}
Let $\t \in \mathbf{T}$ and let $\alpha \in (\X \rtimes T)^\star$.
Then there exist $\t' \in \mathbf{T}$ and $\beta \in R^\star$  such that
\begin{equation}\label{Eq}
\t(\c,\alpha)\da=\t'(\c,\beta)
\end{equation}
for all $\c \in R$.
\end{Lem}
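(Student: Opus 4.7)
The plan is to reduce the construction of $\t'$ and $\beta$ to an application of Lemma \ref{Yuck} to the ``projection'' part of $\t(\c,\alpha)\da$, together with a carefully chosen sandwich that reinstates the correct second component in $T$. I split on the two shapes of $\t\in\mathbf{T}$: either $\t=yxz$, or $\t=y\u z$ with $\u\in\mathbf{T}_{(+)}\cup\mathbf{T}_{(*)}$.

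Suppose first that $\t=yxz$ and write $\alpha=((A,\ol{a}),(B,\ol{b}))$. A direct computation in $\X\rtimes T$ yields
\[
\t(\c,\alpha)\da=\bigl(\1\we A\we \ol{a}\cdot C\we \ol{a}\ol{c}\cdot B\we \ol{a}\ol{c}\ol{b}\cdot \1,\ \ol{a}\ol{c}\ol{b}\bigr).
\]
Setting $U=A\we \ol{a}\ol{c}\ol{b}\cdot \1$, $V=B$ and $g=\ol{a}$, the first component takes the form $\1\we U\we g\cdot C\we g\ol{c}\cdot V$, which is exactly the shape handled by the first statement of Lemma \ref{Yuck}.

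Suppose next that $\t=y\u z$ with $\u\in\mathbf{T}_{(+)}\cup\mathbf{T}_{(*)}$, and write $\alpha=((A,\ol{a}),(B,\ol{b}),\alpha')$. By Lemma \ref{Onedir}, $\u(\c,\alpha')=(U_0\we g_0\cdot \tilde{C}\we g_0\tilde{c}\cdot V_0,\,1)$ for some $U_0,V_0\in\X$ and $g_0\in G$ depending on $\alpha'$. Multiplying by $(A,\ol{a})$ on the left and $(B,\ol{b})$ on the right and then taking $\da$ gives
\[
\t(\c,\alpha)\da=\bigl(\1\we A\we \ol{a}\cdot U_0\we \ol{a}g_0\cdot \tilde{C}\we \ol{a}g_0\tilde{c}\cdot V_0\we \ol{a}\cdot B\we \ol{a}\ol{b}\cdot \1,\ \ol{a}\ol{b}\bigr).
\]
Now $U=A\we \ol{a}\cdot U_0\we \ol{a}\cdot B\we \ol{a}\ol{b}\cdot \1$, $V=V_0$ and $g=\ol{a}g_0$ bring the first component into the form $\1\we U\we g\cdot \tilde{C}\we g\tilde{c}\cdot V$, which is that required by Lemma \ref{Yuck} (first statement if $\u\in\mathbf{T}_{(+)}$, second if $\u\in\mathbf{T}_{(*)}$).

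In either case, Lemma \ref{Yuck} supplies a $\u'\in\mathbf{T}_{(+)}\cup\mathbf{T}_{(*)}$ and $\beta''\in R^\star$ with $\u'(\c,\beta'')=(Y,1)$ for every $\c\in R$, where $Y$ is the common first component computed above. Letting $\ol{h}$ denote the second component of $\t(\c,\alpha)\da$, I then set $\t'=y\u' z\in\mathbf{T}$ and $\beta=((\1,1),(\1\we \ol{h}\cdot \1,\ol{h}),\beta'')\in R^\star$; since $Y\le\1$ and $Y\le \ol{h}\cdot \1$, a short calculation gives $\t'(\c,\beta)=(\1,1)(Y,1)(\1\we \ol{h}\cdot \1,\ol{h})=(Y,\ol{h})=\t(\c,\alpha)\da$. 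The main obstacle is to spot this reorganisation: the naive choice $y\mapsto (A,\ol{a})\da$, $z\mapsto (B,\ol{b})\da$ fails because the resulting product is forced to lie below $\ol{a}\cdot\1$, whereas $\t(\c,\alpha)\da$ need not be. Absorbing all of the ``fixed'' data into a single $U\in\X$ passed to Lemma \ref{Yuck}, and reinstating the second component via the sandwich with $(\1,1)$ and $(\1\we \ol{h}\cdot \1,\ol{h})$, circumvents this difficulty.
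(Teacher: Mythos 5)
Your treatment of the case $\t=y\u z$ with $\u\in\mathbf{T}_{(+)}\cup\mathbf{T}_{(*)}$ is correct and is essentially the paper's own argument (the paper feeds the term $\ol{a}\ol{b}\cdot\1$ into the right-hand sandwich constant rather than into $U$, which is immaterial). The case $\t=yxz$, however, contains a genuine gap. The data you pass to Lemma \ref{Yuck}, namely $U=A\we\ol{a}\ol{c}\ol{b}\cdot\1$, and the sandwich constant $(\1\we\ol{h}\cdot\1,\ol{h})$ with $\ol{h}=\ol{a}\ol{c}\ol{b}$, both depend on $\ol{c}$, that is, on the variable element $\c$. Lemma \ref{Yuck} produces one fixed $\beta''$ for each fixed triple $(U,V,g)$, and Lemma \ref{Two} demands a single pair $(\t',\beta)$ valid for all $\c\in R$; as written, your $\beta$ varies with $\c$, so the conclusion does not follow. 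The failure is structural rather than notational: any term of the form $y\u' z$ with $\u'\in\mathbf{T}_{(+)}\cup\mathbf{T}_{(*)}$ evaluated at fixed constants of $R$ has second component independent of $\c$ (because $\u'(\c,\beta'')$ always has second component $1$), whereas here $\t(\c,\alpha)\da$ has second component $\ol{a}\ol{c}\ol{b}$, which varies with $\c$. Hence no repackaging of $U,V,g$ can rescue your scheme in this case; the output term must remain of the shape $yxz$.

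Moreover, the ``naive choice'' you reject --- $y\mapsto(A,\ol{a})\da$, $z\mapsto(B,\ol{b})\da$ with $\t'=\t=yxz$ --- is exactly what works, and is what the paper does. Your objection (that the resulting product is forced below $\ol{a}\cdot\1$ while $\t(\c,\alpha)\da$ need not be) overlooks that $\c$ ranges over $R$ only: for $\c=(C,\ol{c})\in R$ one has $C\le\1\we\ol{c}\cdot\1$, hence $\ol{a}\cdot C\le\ol{a}\cdot\1$ and $\ol{a}\cdot C\le\ol{a}\ol{c}\cdot\1$, so the first component of $\t(\c,\alpha)\da$, which has $\ol{a}\cdot C$ as a meetand, automatically lies below $\ol{a}\cdot\1$. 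The same two inequalities absorb the extra terms in
\[
(\1\we A\we\ol{a}\cdot\1,\ol{a})(C,\ol{c})(\1\we B\we\ol{b}\cdot\1,\ol{b})
=(\1\we A\we\ol{a}\cdot C\we\ol{a}\ol{c}\cdot B\we\ol{a}\ol{c}\ol{b}\cdot\1,\ol{a}\ol{c}\ol{b})
=\t(\c,\alpha)\da,
\]
which settles this case with $\t'=\t$ and $\beta=\big((A,\ol{a})\da,(B,\ol{b})\da\big)\in R^\star$.
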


\begin{proof}
There are three cases to consider: the first one is if $\t=y x z$.
In this case let $\t'=\t$, and let $\beta=\big( (\1 \we A \we \ol{a}\cdot \1,\ol{a}), (\1 \we B\we \ol{b}\cdot \1,\ol{b})\big)$ 
if $\alpha=\big( (A,\ol{a}),(B,\ol{b})\big)$.
Note that since $\c \in R$, the inequalities $\ol{a}\ol{c}\cdot \1 ,\ol{a}\cdot \1 \geq \ol{a}\cdot C$ imply that
\begin{eqnarray*}
\t(\c,\alpha)\da
\!\!\!&=&\!\!\!(\1 \we A \we \ol{a}\cdot C\we \ol{a}\ol{c}\cdot B \we \ol{a}\ol{c}\ol{b} \cdot \1,\ol{a}\ol{c}\ol{b})\cr
\!\!\!&=&\!\!\!(\1 \we A\we \ol{a}\cdot \1,\ol{a})(C,\ol{c})(\1 \we B\we \ol{b}\cdot \1,\ol{b})=\t'(\c,\beta).
\end{eqnarray*}

The second case is where $\t=y\r z$ for some $\r \in \mathbf{T}_{(+)}$.
Then $\alpha=\big(\alpha', (A,\ol{a}),(B,\ol{b})\big)$ for some $(A,\ol{a}),(B,\ol{b}) \in \X \rtimes T$ and $\alpha' \in (\X \rtimes T)^\star$.
By Lemma \ref{Onedir} there exist $U,V \in \X$ and $g \in G$ such that $\r(\c,\alpha')=(U\we g\cdot C\we g\ol{c} \cdot V,1)$ for every $\c \in R$.
In this case 
\[
\t(\c,\alpha)=(A,\ol{a})(U\we g\cdot C\we g\ol{c}\cdot V,1)(B,\ol{b})=(A \we \ol{a}\cdot U\we \ol{a}g\cdot C\we \ol{a}g\ol{c}\cdot V\we \ol{a}\cdot B,\ol{a}\ol{b}).
\]
By Lemma \ref{Yuck}, there exist $\tilde{\t} \in \mathbf{T}_{(+)}$ and $\beta' \in R^\star$ such that for all $\c \in R$ we have
\[
(\1 \we A \we \ol{a}\cdot U\we \ol{a}g\cdot C\we \ol{a}g\ol{c}\cdot V \we \ol{a}\cdot B,1)=\tilde{\t}(\c,\beta').
\]
So altogether
\begin{eqnarray*}
\t(\c,\alpha)\da\!\!\!&=&\!\!\!(\1\we A \we \ol{a}\cdot U\we \ol{a}g\cdot C\we \ol{a}g\ol{c}\cdot V \we \ol{a}\cdot B \we \ol{a}\ol{b}\cdot \1,\ol{a}\ol{b})\cr
\!\!\!&=&\!\!\!(\1 \we A \we \ol{a}\cdot U\we \ol{a}g\cdot C\we \ol{a}g\ol{c}\cdot V \we \ol{a}\cdot B,1)(\1 \we \ol{a}\ol{b}\cdot \1,\ol{a}\ol{b})\cr
\!\!\!&=&\!\!\!\tilde{\t}(\c,\beta')(\1 \we \ol{a}\ol{b}\cdot \1,\ol{a}\ol{b}),
\end{eqnarray*}
so $\t'=y \tilde{\mathbf{t}} z$ and $\beta=\big( (\1,1),(\1 \we \ol{a}\ol{b}\cdot \1,\ol{a}\ol{b})),\beta'\big)$ satisfy the requirements of the lemma.

The third case, where $\t \in \mathbf{T}_{(*)}$, can be dealt with similarly to the previous case.
\end{proof}

\begin{Lem}\label{Main1}
Let $\rho$ be a $(2,1,1)$-congruence on $R$ and let $\rho^\#$ be the $(2,1,1)$-congruence on $\X \rtimes T$ generated by $\rho$.
Then the restriction of $\rho^\#$ to $R$ equals $\rho$, i.e., $\rho^\#$ extends $\rho$.
As a consequence, $R/\rho$ is $(2,1,1)$-embeddable in $(\X \rtimes T)/\rho^\#$.
\end{Lem}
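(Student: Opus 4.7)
The inclusion $\rho \subseteq \rho^\# \cap (R \times R)$ is immediate from the definition of $\rho^\#$, so the real content is the reverse inclusion: if $s,t \in R$ and $s\,\rho^\#\,t$, then $s\,\rho\,t$. The consequence about embedding $R/\rho$ into $(\X\rtimes T)/\rho^\#$ is then the routine observation that the restriction to $R$ of the natural $(2,1,1)$-morphism $\X\rtimes T \to (\X\rtimes T)/\rho^\#$ has kernel exactly $\rho$, hence factors through an injection.

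The plan for the non-trivial inclusion is to combine Proposition~2.2 with Lemma~\ref{Two} and the down-arrow operation. Assume $s,t \in R$ and $s\,\rho^\#\,t$, with $s \neq t$. By Proposition~2.2 applied to the symmetric relation $\rho$ (viewed inside $\X\rtimes T$), there exist $k \in \NN$, terms $\t_1,\ldots,\t_k \in \mathbf{T}$, sequences $\alpha_1,\ldots,\alpha_k \in (\X\rtimes T)^\star$ and pairs $(c_1,d_1),\ldots,(c_k,d_k) \in \rho$ (so each $c_i,d_i \in R$) such that
\[
s = \t_1(c_1,\alpha_1),\quad \t_1(d_1,\alpha_1) = \t_2(c_2,\alpha_2),\quad \ldots,\quad \t_k(d_k,\alpha_k) = t.
\]
The essential idea is to apply the operation $\da$ to every element in this chain. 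Since $s,t \in R$, we have $s\da = s$ and $t\da = t$, and clearly equal elements have equal down-arrows, so the chain is preserved.

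Now invoke Lemma~\ref{Two}: for each $i$ there exist $\t_i' \in \mathbf{T}$ and $\beta_i \in R^\star$ such that $\t_i(\c,\alpha_i)\da = \t_i'(\c,\beta_i)$ for every $\c \in R$. Specialising to $\c = c_i$ and $\c = d_i$ and applying $\da$ to the chain above yields
\[
s = \t_1'(c_1,\beta_1),\quad \t_1'(d_1,\beta_1) = \t_2'(c_2,\beta_2),\quad \ldots,\quad \t_k'(d_k,\beta_k) = t,
\]
a chain now lying entirely inside $R$, built from terms in $\mathbf{T}$ with parameter sequences in $R^\star$. Since $R$ is a $(2,1,1)$-subsemigroup of $\X\rtimes T$, each $\t_i'(\,\cdot\,,\beta_i)$ defines a unary polynomial on $R$, and since $\rho$ is a $(2,1,1)$-congruence on $R$, it is preserved by such polynomials. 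From $(c_i,d_i) \in \rho$ we conclude $\t_i'(c_i,\beta_i)\,\rho\,\t_i'(d_i,\beta_i)$ for each $i$, and transitivity of $\rho$ gives $s\,\rho\,t$.

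The only non-routine ingredient is the combined use of Lemma~\ref{Two} and the down-arrow trick; the reason it works is precisely the identity $s\da = s$ for $s \in R$, which lets us replace arbitrary parameter sequences $\alpha_i \in (\X\rtimes T)^\star$ by sequences $\beta_i \in R^\star$ without altering the endpoints of the chain. I expect the step that carries all the weight to be this replacement, but that weight has already been absorbed into Lemmas~\ref{Onedir},~\ref{Yuck} and~\ref{Two}; once those are in hand, the proof of Lemma~\ref{Main1} itself is essentially a bookkeeping argument.
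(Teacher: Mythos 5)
Your proposal is correct and follows essentially the same route as the paper: take a $\rho$-chain in $\X\rtimes T$ furnished by the proposition on congruences generated by symmetric relations, apply $\da$ throughout (using $\s\da=\s$ and $\t\da=\t$ for the endpoints in $R$), and invoke Lemma~\ref{Two} to convert it into a $\rho$-chain lying entirely in $R$. The only blemish is the reference to ``Proposition~2.2'', which should point to the proposition quoted from \cite[Proposition 4.5]{M2013}; otherwise the bookkeeping is exactly as in the paper.
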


\begin{proof}
Let $(\s,\t) \in \rho^\# \cap (R \times R)$.
Then there exists a $\rho$-sequence
\[
\s=\t_1(\c_1,\alpha_1),\, \t_1(\d_1,\alpha_1)=\t_2(\c_2,\alpha_2), \ldots,\, \t_n(\d_n,\alpha_n)=\t
\]
connecting $\s$ and $\t$ in $\X \rtimes T$.
By Lemma \ref{Two}, for every $k\ (1\leq k\leq n)$, there exist $\t'_k \in \mathbf{T}$ and $\beta_k \in R^\star$ satisfying $(\ref{Eq})$.
Thus for every $k\ (1\leq k\leq n)$, we have
\[
\t_k(\c_k,\alpha_k)\da=\t'_k(\c_k,\beta_k),\ \t'_k(\d_k,\beta_k)=\t(\d_k,\alpha_k)\da,
\]
showing that the sequence
\[
\s=\t_1(\c_1,\alpha_1)\da=\t'_1(\c_1,\beta_1),\, \t'_1(\d_1,\beta_1)=\t'_2(\c_2,\beta_2),\ldots,\, \t'_n(\d_n,\beta_n)=\t_n(\d_n,\alpha_n)\da=\t
\]
connects $\s$ and $\t$ within $R$, proving that $(\s,\t) \in \rho$.
\end{proof}

Although this lemma easily implies our 
embedding result referred in the title to, before formulating it, we show two lemmas in order to prepare
our result on proper covers.
Note that in the first lemma the action of $T$ need not be nice.

\begin{Lem}\label{in-sigma}
Let $\rho \subseteq \sigma$ be a $(2,1,1)$-congruence on the semidirect product $\X \rtimes T$.
Then, for every $(A,\ol{a}),(B,\ol{b})\in \X\rtimes T$, we have 
$(A,\ol{a})\,\rho\,(B,\ol{b})$ if and only if $\ol{a}=\ol{b}$ and $(A,1)\, \rho \, (B,1)$.
As a consequence, $\rho$ is generated by its restriction to the projections.
\end{Lem}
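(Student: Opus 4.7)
My plan is to prove the biconditional by treating the two implications separately and then deduce the generation statement as a quick corollary.

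For the forward implication, recall from Subsection~\ref{sd-f} that $\sigma$ on $\X\rtimes T$ coincides with the kernel of the second-component projection $(A,\ol{a})\mapsto\ol{a}$. Hence $\rho\subseteq\sigma$ forces $\ol{a}=\ol{b}$ as soon as $(A,\ol{a})\,\rho\,(B,\ol{b})$. Applying the unary operation ${}^+$ to both sides and using $(A,\ol{a})^+=(A,1)$ then yields $(A,1)\,\rho\,(B,1)$.

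For the reverse implication, assume $\ol{a}=\ol{b}$ and set $e=(A,1)$ and $f=(B,1)$, so that $e\,\rho\,f$. The key observation is that left multiplication by the projection $(A,1)$ acts as a restrictor on the first coordinate: using the formula $(a,t)(b,u)=(a\we t\cdot b,tu)$ with $t=1$, one computes
\[
e\cdot(A,\ol{a})=(A\we A,\ol{a})=(A,\ol{a}), \qquad f\cdot(A,\ol{a})=(A\we B,\ol{a}),
\]
and, symmetrically, $e\cdot(B,\ol{a})=(A\we B,\ol{a})$ and $f\cdot(B,\ol{a})=(B,\ol{a})$. Right-multiplying $e\,\rho\,f$ in turn by $(A,\ol{a})$ and by $(B,\ol{a})$ and invoking transitivity gives $(A,\ol{a})\,\rho\,(A\we B,\ol{a})\,\rho\,(B,\ol{a})$, as required. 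Note that this direction uses only that $\rho$ is a $(2,1,1)$-congruence, not the hypothesis $\rho\subseteq\sigma$; this will be important for the final step.

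For the generation consequence, let $\rho_P$ denote the restriction of $\rho$ to the projection semilattice $P(\X\rtimes T)=\{(A,1):A\in\X\}$ and let $\rho_P^{\#}$ be the $(2,1,1)$-congruence it generates on $\X\rtimes T$. The inclusion $\rho_P^{\#}\subseteq\rho$ is immediate. Conversely, given $(A,\ol{a})\,\rho\,(B,\ol{b})$, the forward implication yields $\ol{a}=\ol{b}$ and $(A,1)\,\rho_P\,(B,1)$, so in particular $(A,1)\,\rho_P^{\#}\,(B,1)$; applying the reverse-implication argument verbatim to the congruence $\rho_P^{\#}$ (which is permissible precisely because that argument does not require containment in $\sigma$) gives $(A,\ol{a})\,\rho_P^{\#}\,(B,\ol{a})$.

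No serious obstacle is anticipated: the whole argument rests on the one-line identity $(A,1)\cdot(A,\ol{a})=(A,\ol{a})$, which is immediate from the semidirect product multiplication. The only subtlety is to be careful that the reverse-direction argument is formulated without reference to $\rho\subseteq\sigma$, so that it can be re-run for $\rho_P^{\#}$ without circular reasoning.
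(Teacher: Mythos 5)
Your proof is correct and follows essentially the same route as the paper: the forward direction via $\sigma$ being the kernel of the second projection plus an application of ${}^+$, and the reverse direction by multiplying the related projections $(A,1)\,\rho\,(B,1)$ by $(A,\ol{a})$ and $(B,\ol{a})$ and using transitivity through $(A\we B,\ol{a})$. Your explicit derivation of the generation consequence (noting the reverse implication needs no containment in $\sigma$) is a careful spelling-out of what the paper leaves implicit.
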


\begin{proof}
Let $\tau=\{ \big((A,\ol{a}),(B,\ol{b})\big)\in (\X \rtimes T)\times(\X \rtimes T): \ol{a}=\ol{b}\ \hbox{and}\ (A,1)\, \rho \, (B,1)\}$.
It is clear that $\rho \subseteq \tau$.
Conversely, let $\big( (A,\ol{a}),(B,\ol{b})\big) \in \tau$.
Then
\[
(A\we B,\ol{a})=(A,1)(B,\ol{a}) \ \rho\ (B,1)(B,\ol{a})=(B,\ol{a}),
\]
which together with its dual completes the proof.
\end{proof}

\begin{Lem}\label{epsilon}
Let $\rho$ be a $(2,1,1)$-congruence on $R$.
Denote by $\rho_P$ the restriction of $\rho$ to $P=P(R)$ and by $\tau$ and $\tau^\#$ the $(2,1,1)$-congruences on $R$ and on $\X \rtimes T$, 
respectively, generated by $\rho_P$.
Then $R/\tau$ $(2,1,1)$-embeds into $(\X/\varepsilon) \rtimes T$ for an appropriate $G$-invariant congruence $\varepsilon$ on $\X$,
and $(\X/\varepsilon) \rtimes T$ is a 
 nice  
semidirect product of $(\X/\varepsilon)$ by $T$. 
Moreover, the $(2,1,1)$-congruence $\rho/\tau$ on $R/\tau$ is projection separating, and the corresponding congruence on the image of $R/\tau$
in $(\X/\varepsilon) \rtimes T$ extends to a  $(2,1,1)$-congruence on $(\X/\varepsilon) \rtimes T$.
Consequently, $R/\rho$ is $(2,1,1)$-embeddable in a $(2,1,1)$-morphic image of $(\X/\varepsilon) \rtimes T$.
\end{Lem}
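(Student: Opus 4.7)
The plan is to define $\varepsilon$ as the smallest $G$-invariant semilattice congruence on $\X$ containing $\rho_P$ (under the identification of $P(R)$ with $\Y\subseteq\X$). By Lemma \ref{per-eps}, $(\X/\varepsilon)\rtimes T$ is then a nice semidirect product. The candidate $(2,1,1)$-morphism is $\phi\colon R\to (\X/\varepsilon)\rtimes T$, $(A,\ol{a})\mapsto(A\varepsilon,\ol{a})$. Since $\rho_P\subseteq\varepsilon$, $\phi$ annihilates $\tau$, so the task reduces to showing that the induced map $\ol{\phi}\colon R/\tau\to(\X/\varepsilon)\rtimes T$ is injective.

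For injectivity, $\phi(A,\ol{a})=\phi(B,\ol{b})$ forces $\ol{a}=\ol{b}$ and $A\,\varepsilon\,B$ with $A,B\le\ol{a}\cdot\1$. Writing $(A,\ol{a})=(A,1)(\ol{a}\cdot\1,\ol{a})$ in $R$ and similarly for $B$, it suffices to prove that, for $A,B\in\Y$,
\[
A\,\varepsilon\,B\ \Longrightarrow\ (A,1)\,\tau\,(B,1)\quad\text{in }R.
\]
Using the standard chain characterisation of $\varepsilon$ as the semilattice congruence generated by $\{(g\cdot P,g\cdot Q):g\in G,\,(P,Q)\in\rho_P\}$, and meeting each link of such a chain with $\1$ (which keeps the chain inside $\Y$ without destroying $\varepsilon$-relations), this further reduces to the following key claim: for every $g\in G$ and every $(P,Q)\in\rho_P$,
\[
(g\cdot P\we\1,1)\,\tau\,(g\cdot Q\we\1,1)\quad\text{in }R.
\]

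This key claim is the main obstacle, and it is exactly where the hypothesis that the action is nice earns its keep. I prove it by induction on the length $n$ of a nice factorisation $g=w_1\cdots w_n$ in $T$ with respect to $\1$. For $g=t\in T$, one multiplies $(P,1)\,\tau\,(Q,1)$ on the left by $(\1\we t\cdot\1,t)\in R$ and takes $^+$; for $g=t^{-1}$ with $t\in T$, one multiplies on the right by $(t\cdot\1,t)\in R$ and takes $^*$. The inductive step applies the hypothesis to $g'=w_2\cdots w_n$ (which inherits a nice factorisation of length $n-1$), obtaining $(g'\cdot P\we\1,1)\,\tau\,(g'\cdot Q\we\1,1)$, and then applies the length-one case for $w_1$ to this pair to produce $(g\cdot P\we w_1\cdot\1\we\1,1)\,\tau\,(g\cdot Q\we w_1\cdot\1\we\1,1)$. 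The nice inequality $w_1\cdot\1\ge\1\we g\cdot\1$, together with $g\cdot P\le g\cdot\1$, forces $g\cdot P\we\1\le w_1\cdot\1$, so the extra meet with $w_1\cdot\1$ is redundant, completing the induction.

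The remaining assertions are then routine. Projection-separation of $\rho/\tau$ is immediate from the sandwich $\rho_P\subseteq\tau|_P\subseteq\rho|_P=\rho_P$ on $P(R)$. For the extension of the image $\ol{\rho}$ of $\rho/\tau$, one checks that $\ol{\phi}(R/\tau)$ coincides with the ``$R$-subsemigroup'' $\{(\ol{A},\ol{a})\in(\Y/\varepsilon)\times T:\ol{A}\le\ol{a}\cdot\ol{\1}\}$ of the nice semidirect product $(\X/\varepsilon)\rtimes T$, so Lemma \ref{Main1} applies and extends $\ol{\rho}$ to a $(2,1,1)$-congruence $\widehat{\rho}$ on $(\X/\varepsilon)\rtimes T$. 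Composing $\ol{\phi}$ with the canonical morphism $(\X/\varepsilon)\rtimes T\to((\X/\varepsilon)\rtimes T)/\widehat{\rho}$ and using that $\widehat{\rho}$ restricts to $\ol{\rho}$ on the image then yields the desired $(2,1,1)$-embedding of $R/\rho$ into $((\X/\varepsilon)\rtimes T)/\widehat{\rho}$.
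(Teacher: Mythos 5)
Your proof is correct, but it reaches the embedding $R/\tau\hookrightarrow(\X/\varepsilon)\rtimes T$ by a genuinely different route from the paper. The paper defines $\varepsilon$ \emph{a posteriori}, as $\{(A,B):(A,1)\,\tau^\#\,(B,1)\}$ where $\tau^\#$ is the $(2,1,1)$-congruence on $\X\rtimes T$ generated by $\rho_P$; it then invokes Lemma \ref{Main1} to see that $\tau^\#$ and $\rho^\#$ extend $\tau$ and $\rho$, and Lemma \ref{in-sigma} to identify $(\X\rtimes T)/\tau^\#$ with $(\X/\varepsilon)\rtimes T$, so both the embedding of $R/\tau$ and the extension of $\rho/\tau$ drop out of results already proved. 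You instead define $\varepsilon$ \emph{a priori} as the $G$-invariant congruence generated by $\rho_P$ and prove injectivity of the induced map by hand: a Mal'cev chain argument (meeting every link with $\1$ to stay inside $\Y$) reduces everything to the key claim $(g\cdot P\we\1,1)\,\tau\,(g\cdot Q\we\1,1)$, which you establish by induction on the length of a nice factorisation of $g$, using $w_1\cdot\1\ge\1\we g\cdot\1$ to discard the superfluous meet. This is in effect a self-contained re-proof of the special case of Lemma \ref{Main1} needed here (congruences generated by pairs of projections), in the spirit of Lemma \ref{Yuck} but much simpler because only projections occur; what it buys is an explicit intrinsic description of $\varepsilon$, at the cost of redoing work the paper's machinery already supplies. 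Your final step --- identifying the image of $R/\tau$ with the subsemigroup (\ref{R}) inside the nice semidirect product $(\X/\varepsilon)\rtimes T$ and applying Lemma \ref{Main1} there to extend $\ol{\rho}$ --- is correct and is needed in both approaches. Two trivial slips: the elements $(\ol{a}\cdot\1,\ol{a})$ and $(t\cdot\1,t)$ you multiply by need not lie in $R$ (that would require $t\cdot\1\le\1$); you should use $(\1\we\ol{a}\cdot\1,\ol{a})$ and $(\1\we t\cdot\1,t)$, which yield exactly the same products and leave your computations unchanged.
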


\begin{proof}
Notice that $\tau\subseteq\sigma_R$ and $\tau^\#\subseteq\sigma_{\X\rtimes T}$.
By definition, $\rho_P \subseteq \tau \subseteq \rho$, and as the restriction of $\rho$ to $P$ equals $\rho_P$  the same is true for $\tau$, thus the $(2,1,1)$-congruence $\rho/\tau$ on $R/\tau$ is projection separating.
Denote by $\rho^\#$ the $(2,1,1)$-congruence on $\X\rtimes T$ generated by $\rho$.
Since $T$ acts 
 nicely over $G$ on $\X$, we see by Lemma \ref{Main1} that $\rho^\#$ and $\tau^\#$ extend $\rho$ and $\tau$, respectively.
Put $\varepsilon=\{(A,B)\in \X\times \X : (A,1)\,\tau^\#\,(B,1)\}$.
Then $\varepsilon$ is obviously a congruence on $\X$, and it is easily seen to be $G$-invariant.
For, let $A,B\in\X$ with $A\,\varepsilon\,B$ and $\ol a\in T$.
Then we obtain by Lemma \ref{in-sigma} that $(A,\ol a)\,\tau^\#\,(B,\ol a)$ which implies
$({\ol a}^{-1}\cdot A,1)=(A,\ol a)^*\,\tau^\#\,(B,\ol a)^*=({\ol a}^{-1}\cdot B,1)$.
Hence ${\ol a}^{-1}\cdot A\, \varepsilon\,
{\ol a}^{-1}\cdot B$ follows. 
Moreover, we also see that
$(\ol a\cdot A,1)=(\ol a\cdot A,\ol a)^+=\big((\ol a\cdot A,\ol a)(A,1)\big)^+\,\tau^\#\,
\big((\ol a\cdot A,\ol a)(B,1)\big)^+=(\ol a(A\wedge B),\ol a)^+=(\ol a(A\wedge B),1)$,
and so we deduce that $\ol a\cdot A\;\varepsilon\;\ol a\cdot(A\wedge B)$.
Similarly,  $\ol a\cdot B\;\varepsilon\;\ol a\cdot(A\wedge B)$ also holds, implying the relation $\ol a\cdot A\;\varepsilon\;\ol a\cdot B$.
Since $G$ is generated by $T$, one can see that $g\cdot A\;\varepsilon\;g\cdot B$ for every $g\in G$.
Thus $\varepsilon$ is, indeed, $G$-invariant, so the action of $G$ on $\X$ induces an action of $G$ on $\X/\varepsilon$, 
and defines semidirect products $(\X/\varepsilon)\rtimes G$ and $(\X/\varepsilon)\rtimes T$.
Lemma \ref{per-eps} implies that the latter semidirect product is nice.

By Lemma \ref{in-sigma} we also have that $\iota\colon(\X \rtimes T) / \tau^\# \to (\X/\varepsilon) \rtimes T,\ (A,\ol a)\tau^\#\mapsto (A\varepsilon,\ol a)$
is an isomorphism.
Let $\tilde{\rho}$ be the congruence on $(\X/ \varepsilon) \rtimes T$ corresponding via $\iota$ to the congruence $\rho^\#/ \tau^\#$ on $(\X \rtimes T)/\tau^\#$.
The rest follows by basic universal algebra: since $\rho^\#$ and $\tau^\#$ extend $\rho$ and $\tau$, respectively,
we obtain that $\rho^\# / \tau^\#$ extends $\rho / \tau$, thus 
$(2,1,1)$-embedding
$R/\rho$, which is isomorphic to $(R/\tau)/(\rho/\tau)$, into $((\X/\varepsilon) \rtimes T) / \tilde{\rho}$.
\end{proof}

Now we are ready to formulate the main result of the paper.
It strengthens \cite[Theorem 4.1]{M2013} which  establishes that every restriction semigroup is $(2,1,1)$-embeddable 
in an almost left factorisable restriction semigroup, and \cite[Theorem 4.11]{M2012} which says that
every restriction semigroup has a proper cover $(2,1,1)$-embeddable in a $W$-product of a semilattice by a monoid.

Recall from Subsection \ref{sd-f} that a restriction semigroup (monoid) is almost factorisable (factorisable) if and only if it is a $(2,1,1)$-morphic image of a 
semidirect product of a semilattice (semilattice with identity) by a 
monoid.

\begin{Thm}\label{main}
\begin{enumerate}
\item[(i)]
Every restriction semigroup is $(2,1,1)$-embeddable in an almost factorisable restriction semigroup, or, equivalently, in a factorisable restriction monoid.
\item[(ii)]
Every restriction semigroup has a proper cover which is 
$(2,1,1)$-embeddable in a nice semidirect product of a semilattice by a 
monoid, and consequently, in a semidirect product of a semilattice by a group.
\end{enumerate}
\end{Thm}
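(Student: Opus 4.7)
The plan is to apply Lemmas \ref{Main1} and \ref{epsilon} to the free restriction monoid, invoking Example \ref{Ex1}.  Given any restriction semigroup $S$, adjoin an identity if necessary to obtain a restriction monoid $S^1$ in which $S$ is a $(2,1,1)$-ideal.  Choose a non-empty set $\Omega$ together with a surjective $(2,1,1)$-morphism $\phi\colon\FR\to S^1$.  By the description in Subsection \ref{free-restr} and Example \ref{Ex1}, the free restriction monoid $\FR$ coincides with the proper restriction monoid $R$ of \eqref{R} for the nice action of $T=\Omega^*$ on $\X$ over $G=\FG$, so the hypotheses of Lemmas \ref{Main1} and \ref{epsilon} are in force.

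For part (i), set $\rho=\ker\phi$ and let $\rho^\#$ be the $(2,1,1)$-congruence it generates on $\X\rtimes\Omega^*$.  By Lemma \ref{Main1}, $\rho^\#$ extends $\rho$, and so $S^1\cong R/\rho$ $(2,1,1)$-embeds into $(\X\rtimes\Omega^*)/\rho^\#$.  The latter is a $(2,1,1)$-morphic image of a semidirect product of a semilattice by a monoid, hence an almost factorisable restriction semigroup by the characterisation recalled in Subsection \ref{sd-f}.  Therefore $S$, viewed as a $(2,1,1)$-subsemigroup of $S^1$, embeds in an almost factorisable restriction semigroup; the equivalent formulation in terms of factorisable restriction monoids is supplied by Subsection \ref{sd-f} as well (via Lemma \ref{new} applied to this almost factorisable ambient).

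For part (ii), apply Lemma \ref{epsilon} to the same $\rho$.  With $\tau$ the $(2,1,1)$-congruence on $R$ generated by $\rho_P$, we obtain a $(2,1,1)$-embedding of $R/\tau$ into the nice semidirect product $(\X/\varepsilon)\rtimes\Omega^*$ for some $\FG$-invariant congruence $\varepsilon$ on $\X$, together with a projection separating surjection $R/\tau\to R/\rho\cong S^1$.  Since $(\X/\varepsilon)\rtimes\Omega^*$ is proper and properness passes to $(2,1,1)$-subsemigroups, $R/\tau$ is proper, hence a proper cover of $S^1$.  To obtain a proper cover of $S$ itself, take the preimage $C$ of $S$ under $R/\tau\to S^1$; since $S$ is a $(2,1,1)$-ideal of $S^1$, $C$ is a $(2,1,1)$-subsemigroup of $R/\tau$ mapping onto $S$, and restricting a projection separating morphism to a $(2,1,1)$-subsemigroup keeps it projection separating, so $C$ is a proper cover of $S$ that embeds in the nice semidirect product $(\X/\varepsilon)\rtimes\Omega^*$.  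The embedding into a semidirect product of a semilattice by a \emph{group} then follows from the obvious $(2,1,1)$-embedding $(\X/\varepsilon)\rtimes\Omega^*\hookrightarrow(\X/\varepsilon)\rtimes\FG$ induced by the inclusion $\Omega^*\hookrightarrow\FG$.

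The bulk of the technical work is already encapsulated in Lemmas \ref{Main1} and \ref{epsilon}, so no single step should present a serious obstacle.  The only point demanding some care is the reduction from a proper cover of $S^1$ to one of $S$ in part (ii), where one must verify that the preimage under the projection separating morphism is a proper $(2,1,1)$-subsemigroup and that restricting the morphism preserves projection separation.
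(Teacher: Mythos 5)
Your proposal is correct and follows essentially the same route as the paper: reduce to $S^1\cong\FR/\rho$, invoke Example \ref{Ex1}, and apply Lemmas \ref{Main1} and \ref{epsilon}. The only (harmless) divergence is the last step of (ii), where you pass to the preimage of $S$ under the covering morphism $\FR/\tau\to S^1$, while the paper instead arranges the $\rho$-class of the identity of $\FR$ to be a singleton and deletes the identity element from $\FR/\tau$; both yield the required proper cover of $S$.
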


\begin{proof}
(i)
Let $S$ be a restriction semigroup.
By Lemma \ref{new}, it suffices to embed the restriction monoid $S^1$ into
a $(2,1,1)$-morphic image of a 
semidirect product of a semilattice by a 
monoid.
Clearly, $S^1$ is isomorphic to $\FR / \rho$ for some set $\Omega$ and $(2,1,1)$-congruence $\rho$ on $\FR$.
Here $\FR$ is just the $(2,1,1)$-subsemigroup 
$R$ in (\ref{R}) of the semidirect product $\X\rtimes \Omega^*$ given in Subsection \ref{free-restr}.
Example \ref{Ex1}
shows that the action of $\Omega^*$ on $\X$ is nice over $\FG$ with respect to $\1$, and so Lemma \ref{Main1} implies that
$\FR / \rho$ and its 
$(2,1,1)$-isomorphic copy $S^1$ $(2,1,1)$-embed into
a $(2,1,1)$-morphic image of $\X \rtimes \Omega^*$.

(ii)
Applying Lemma \ref{epsilon} for $R=\FR$ and the $(2,1,1)$-congruence $\rho$ in the previous paragraph,
we obtain that $\FR/\tau$ is $(2,1,1)$-embeddable in a nice semidirect product $\X' \rtimes \Omega^*$ where $\X'$ is a factor semilattice of $\X$
over a $\FG$-invariant congruence.
This implies that the semidirect product $\X' \rtimes \FG$ is also defined, and $\X' \rtimes \Omega^*$ is a $(2,1,1)$-subsemigroup in  $\X' \rtimes \FG$.
The restriction monoid $\FR/\tau$ is proper since the semidirect product $\X' \rtimes \Omega^*$ is.
Moreover, Lemma \ref{epsilon} also establishes that $\rho/\tau$ is a projection separating $(2,1,1)$-congruence on $R/\tau$.
This shows that $\FR/\tau$ is a proper cover of $(\FR/\tau)/(\rho/\tau)$, and so of $\FR/\rho$ and $S^1$.
This completes the proof if $S=S^1$, that is, if $S$ is a monoid.
In the opposite case, it is routine to check that we can ensure the $\rho$-class of the identity element of $\FR$ is a singleton, and the same property follows for $\tau$.
This implies that the restriction semigroup  obtained from $\FR/\tau$ by deleting its identity element is a proper cover of $S$ with the required properties.
\end{proof}

Note that, similarly to the embedding theorems in \cite{M2013} and \cite{K2014}, the embedding given here is not a monoid embedding.


\section{Additional remarks}

One can see from Lemmas \ref{in-sigma} and \ref{epsilon}
(see also the proof of Theorem \ref{main}(ii))
that if $C$ is a restriction
monoid which is a $(2,1,1)$-factor
monoid of a free restriction monoid over a $(2,1,1)$-congruence contained in $\sigma$, 
then $C$ is $(2,1,1)$-embeddable in a nice semidirect product.
Both Kudryavtseva \cite{K2014} and Jones \cite{Jones} have introduced a somewhat more general class of restriction monoids, called 
ultra $F$-restriction monoids in \cite{K2014} and perfect restriction monoids in \cite{Jones}.
Moreover, if the greatest reduced factor of such a restriction monoid is a free monoid then it is proved to be 
$(2,1,1)$-embeddable in a $W$-product in \cite[Theorem 28]{K2014}.
In this section we strengthen this result by considering an even more general class of restriction monoids and showing that their members 
are 
$(2,1,1)$-embeddable in a nice  
semidirect product in such a way that each of their $(2,1,1)$-congruences extends to the semidirect product.
Later on, we adopt the name `perfect' from \cite{Jones} for the class of restriction monoids 
mentioned, and we use the notation of \cite{K2014}. 

Let $T$ be a monoid and $\Y$ a semilattice with identity element.
Denote by $O_\Y$ the inverse submonoid of the symmetric inverse monoid $I(\Y)$ consisting of all isomorphisms between the ideals of $\Y$.
Notice that the Munn semigroup $T_\Y$ is an inverse submonoid in $O_\Y$.
A {\em left partial action of $T$ on $\Y$} is defined to be a dual monoid prehomomorphism $\alpha\colon T\to \op{O}_\Y, t\mapsto \alpha_t$, that is,
$\alpha$ is required to have the properties that $\alpha_1$ is the identity automorphism of $\Y$, and $\alpha_t\alpha_u$ is a restriction
of $\alpha_{tu}$ for every $t,u\in T$.
For any $t\in T$ and $A\in\Y$, we denote the element $\alpha_tA\in\Y$ by $t\diamond A$.
Since $O_\Y$ is an inverse monoid, the mapping $\alpha'\colon T\to O_\Y,\ t\mapsto \alpha_t^{-1}$ 
is a dual monoid prehomomorphism, and so $\alpha'$ defines a right partial action of $T$ on $\Y$.
For any $t\in T$ and $A\in\Y$, the element $A\alpha'_t\in\Y$ will be denoted by $A\circ t$.
Clearly, the partial actions $\diamond$ and $\circ$ are reverse to each other, that is, for every $t\in T$ and $A\in\Y$,
if $t\diamond A$ is defined then $(t\diamond A)\circ t$ is also defined and $(t\diamond A)\circ t=A$, and similarly,
if $A\circ t$ is defined then $t\diamond (A\circ t)$ is also defined and $t\diamond (A\circ t)=A$.

Consider the set
\[M(T,\Y)=\{(A,t)\in \Y\times T : A\circ t\ \hbox{is defined}\},\]
and define the following operations on it:
\[(A,t)(B,u)=(t\diamond((A\circ t)\wedge B),tu),\quad
(A,t)^+=(A,1)\quad\hbox{and}\quad
(A,t)^*=(A\circ t,1).\]
Then $M(T,\Y)$ is a proper restriction monoid, 
and, conversely, each proper restriction monoid
$S$ is isomorphic to $M(S/\sigma,P(S))$ where the left partial action of $S/\sigma$ on $P(S)$ is induced by $S$ in a natural way, see \cite{CG}.

A congruence $\rho$ on a semigroup $S$ is called {\it perfect} if for every $a,b \in S$ we have 
$(a\rho)(b\rho)=(ab)\rho$, where 
the left hand side of the equality is the set product of the classes $a\rho$ and $b\rho$.
A restriction monoid $S$ is called {\em perfect} if 
$\sigma$ is a perfect congruence and each $\sigma$-class has a greatest element with respect to the natural partial order.
It follows (\cite[Theorem 1.1]{Jones})
that a restriction monoid is perfect if and only if it is isomorphic to a restriction monoid $M(T,\Y)$
where the left partial action of $T$ on $\Y$ is a monoid homomorphism from $T$ into $\op{T}_\Y$.

\begin{Rem}
\begin{enumerate}
\item[(i)] 
Note that \cite{K2014} uses the term `left partial action' in a more general sense.
However, in a semilattice, the order ideals (resp.\ principal order ideals) and the ideals (resp.\ principal ideals) coincide.
Moreover, the order isomorphisms and the isomorphisms between the ideals of a semilattice coincide.
Therefore a left partial action of a monoid $T$ on a semilattice $\Y$ defined above is just what is called in \cite{K2014} a 
`left partial action of $T$ on $\Y$ such that axioms (A),(B),(C) hold'.
\item[(ii)]
The restriction monoid denoted by $W(T,\Y)$ in \cite{Jones} is the left-right dual of $M(T,\Y)$ defined above,
and so it is a much more general construction than what is usually meant by the notation  
$W(T,\Y)$ (see \cite{FG1=5}, \cite{GSz}, \cite{K2014}, \cite{M2012}, \cite{M2013}), 
in particular in this paper, called a $W$-product of a semilattice by a monoid and denoted $W(T,\Y)$.
\end{enumerate}
\end{Rem}

Suppose now that $T$ is a monoid embeddable in a group, $\X$ is a semilattice, and $T$ acts on $\X$ by automorphisms.
As above, let us choose and fix a group $G$ such that 
$T$ is a submonoid in $G$ generating $G$, $G$ acts on $\X$, 
and the action of $T$ on $\X$ is just the restriction of the action of $G$ on $\X$.
Moreover, consider a principal ideal $\Y$ of $\X$, and denote its greatest element by $\1$.
It is routine to check that the restriction of the action of $G$ on $\X$ to $\Y$ is a left partial action
$\alpha\colon G\to \op{T}_\Y$ where the domain and range of $\alpha_g\ (g\in G)$ are the principal ideals of $\Y$ with greatest elements
$g^{-1}\cdot\1\wedge\1$ and $g\cdot\1\wedge\1$, respectively.
Therefore the restriction monoids $M(G,\Y)$ and $M(T,\Y)$ are defined, and we have 
$M(T,\Y)\leqslant M(G,\Y)$.
Taking into account that all elements of $\X$ appearing in $M(G,\Y)$ 
belong to the subsemilattice $G\cdot\Y$, 
we can suppose without loss of generality that $\X=G\cdot\Y$.
In this case, $(G,\X,\Y)$ is a McAlister triple, and it is easy to see that
$M(G,\Y)=P(G,\X,\Y)$, the $P$-semigroup defined by it.
This implies that $M(G,\Y)$ is an $F$-inverse monoid, and it is an inverse subsemigroup in the semidirect poduct $\X\rtimes G$.
Furthermore, it also follows that $M(T,\Y)$ is equal to the $(2,1,1)$-subsemigroup $R$ of the semidirect product $\X\rtimes T$
defined in (\ref{R}).
Thus Lemmas \ref{Main1} and \ref{epsilon} imply the following statement.

\begin{Prop}\label{K1}
Suppose that a left partial action of a monoid $T$ on a semilattice $\Y$ with identity $\1$ can be extended to 
an action $\cdot$ of $T$ on a semilattice $\X$ containing $\Y$ as a principal ideal such that 
the action $\cdot$ is 
 nice  
over a group $G$ with respect to $\1$, and we have $\X=G\cdot\Y$.
Then the following hold:
\begin{enumerate}
\item[(i)] $M(T,\Y)\leqslant \X\rtimes T,  M(G,\Y)\leqslant \X\rtimes G$ where $\X\rtimes T$ is a 
 nice  
semidirect product
and $M(G,\Y)$ is an $F$-inverse monoid;
\item[(ii)] every $(2,1,1)$-congruence $\rho$ on $M(T,\Y)$ extends to $\X\rtimes T$;
\item[(iii)] for every $(2,1,1)$-congruence $\rho$ of $M(T,\Y)$ with $\rho\subseteq\sigma_{M(T,\Y)}$,
the restriction monoid $M(T,\Y)/\rho$ embeds into a 
 nice  
semidirect product of a factor semilattice of $\X$ by $T$.
\end{enumerate}
\end{Prop}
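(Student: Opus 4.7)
The plan is to observe that all three parts follow more or less immediately from what has been set up in the paragraph preceding the statement together with Lemmas \ref{Main1} and \ref{epsilon}, so the proof is essentially an assembly job.

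For part (i), I would first recall from the discussion just above the proposition that, under the assumption $\X=G\cdot\Y$, we have $M(G,\Y)=P(G,\X,\Y)$ and $M(T,\Y)$ coincides with the $(2,1,1)$-subsemigroup $R$ of $\X\rtimes T$ defined in (\ref{R}). Thus the first inclusion is $M(T,\Y)=R\leqslant \X\rtimes T$, and the second is the known fact that $P$-semigroups sit as inverse subsemigroups of the corresponding semidirect product $\X\rtimes G$. The hypothesis that the action of $T$ on $\X$ is nice over $G$ with respect to $\1$, together with $\X=G\cdot\Y$, is exactly the definition of $\X\rtimes T$ being a nice semidirect product. Finally, $M(G,\Y)=P(G,\X,\Y)$ is an $F$-inverse monoid because $(G,\X,\Y)$ is a McAlister triple whose semilattice $\X$ has an identity adjoined principal ideal structure that guarantees each $\sigma$-class has a maximum: every element of $M(G,\Y)$ of the form $(A,g)$ is below the element $(g\cdot\1\wedge\1,g)$, which is in $M(G,\Y)$.

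For part (ii), since $\X\rtimes T$ is a nice semidirect product, Lemma \ref{Main1} applies verbatim to $R=M(T,\Y)$: the $(2,1,1)$-congruence $\rho^\#$ on $\X\rtimes T$ generated by $\rho$ restricts to $\rho$ on $R$, so $\rho^\#$ is the required extension.

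For part (iii), I would simply invoke Lemma \ref{epsilon} applied to $R=M(T,\Y)$ and the congruence $\rho$. That lemma constructs a $G$-invariant congruence $\varepsilon$ on $\X$ and yields, via Lemma \ref{per-eps}, that $(\X/\varepsilon)\rtimes T$ is a nice semidirect product into which $M(T,\Y)/\tau$ $(2,1,1)$-embeds, and it shows that $\rho/\tau$ is projection separating. Under the hypothesis $\rho\subseteq\sigma_{M(T,\Y)}$, Lemma \ref{in-sigma} additionally guarantees that $\rho$ is already determined by its restriction to the projections $P(M(T,\Y))$, so in fact $\tau=\rho$ in this setting, and the embedding of $M(T,\Y)/\tau=M(T,\Y)/\rho$ into the nice semidirect product $(\X/\varepsilon)\rtimes T$ is exactly the statement. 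The only slightly delicate point is checking that the restriction of $\rho$ to the projections of $R$ generates $\rho$ itself on $R$ when $\rho\subseteq\sigma_R$; this follows by the same argument as in Lemma \ref{in-sigma}, using that projections of $R$ are the elements $(A,1)$ with $A\in\Y$ and that $\sigma_R$ is the restriction of $\sigma_{\X\rtimes T}$. There is no genuine obstacle — the work has already been done in Lemmas \ref{Main1}, \ref{in-sigma}, \ref{per-eps} and \ref{epsilon}, so the main task is to verify that the hypotheses of those lemmas hold in the present setup, which the paragraph preceding the proposition has arranged.
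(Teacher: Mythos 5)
Your proposal is correct and follows exactly the route the paper intends: the paper gives no separate proof of the proposition, stating only that it follows from the preceding identification $M(T,\Y)=R$, $M(G,\Y)=P(G,\X,\Y)$ together with Lemmas \ref{Main1} and \ref{epsilon}, which is precisely your assembly. Your extra check for (iii) — that $\rho\subseteq\sigma_{M(T,\Y)}$ forces $\tau=\rho$ by the argument of Lemma \ref{in-sigma} carried out inside $R$, so that the embedding lands in $(\X/\varepsilon)\rtimes T$ itself rather than a morphic image — is the right (and only) point needing verification, and your argument for it is sound.
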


Since any proper restriction monoid $S$ is $(2,1,1)$-isomorphic to the monoid $M(S/\sigma,P(S))$ defined by means of 
the induced left partial action, we have the following consequence.

\begin{Cor}\label{K2}
Let $S$ be a proper restriction monoid such that the induced left partial action of $S/\sigma$ on $P(S)$ can be extended 
to an action $\cdot$ of $S/\sigma$ on a semilattice $\X$ containing $P(S)$ as a principal ideal in such a way that
the action $\cdot$ is 
 nice  
over a group $G$ with respect to the identity element of $P(S)$ and $\X=G\cdot P(S)$. 
Then $S$ is $(2,1,1)$-embeddable in the 
 nice  
semidirect product $\X\rtimes (S/\sigma)$,
which is a $(2,1,1)$-subsemigroup in the inverse semigroup $\X\rtimes G$, and the 
$(2,1,1)$-embedding  has the following properties:
\begin{enumerate}
\item[(i)] each $(2,1,1)$-congruence of $S$ extends to $\X\rtimes (S/\sigma)$ (along the 
$(2,1,1)$-embedding), and
\item[(ii)] for every $(2,1,1)$-congruence $\rho$ of $S$ with $\rho\subseteq\sigma$,
the restriction monoid $S/\rho$ embeds into a 
 nice  
semidirect product of a factor semilattice of $\X$ by $S/\sigma$.
\end{enumerate}
\end{Cor}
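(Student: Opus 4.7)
The plan is to reduce Corollary \ref{K2} to Proposition \ref{K1} via the $(2,1,1)$-isomorphism $\psi\colon S \to M(S/\sigma, P(S))$ that is available because $S$ is proper. Put $T = S/\sigma$ and $\Y = P(S)$, and let $\1$ denote the identity element of $\Y$. By hypothesis, the induced left partial action of $T$ on $\Y$ is the restriction of an action $\cdot$ of $T$ on a semilattice $\X \supseteq \Y$ that is nice over a group $G$ (with respect to $\1$) and satisfies $\X = G \cdot \Y$. These are precisely the assumptions needed for Proposition \ref{K1}, so I may apply it directly.

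From Proposition \ref{K1}(i), the restriction monoid $M(T,\Y)$ sits as a $(2,1,1)$-subsemigroup in the nice semidirect product $\X \rtimes T$, which itself is a $(2,1,1)$-subsemigroup of $\X \rtimes G$, the latter being an inverse semigroup containing the $F$-inverse monoid $M(G,\Y)$. Composing the inclusion $M(T,\Y) \hookrightarrow \X \rtimes T$ with $\psi$ yields the required $(2,1,1)$-embedding $S \hookrightarrow \X \rtimes T = \X \rtimes (S/\sigma)$. For part (i) of the corollary, Proposition \ref{K1}(ii) states that every $(2,1,1)$-congruence on $M(T,\Y)$ extends to a $(2,1,1)$-congruence on $\X \rtimes T$; transporting via $\psi$, which is a $(2,1,1)$-isomorphism, gives the analogous statement for $S$. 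For part (ii), since $\psi$ is a $(2,1,1)$-isomorphism it sends $\sigma_S$ to $\sigma_{M(T,\Y)}$, so any $(2,1,1)$-congruence $\rho \subseteq \sigma_S$ on $S$ corresponds to a $(2,1,1)$-congruence $\psi(\rho) \subseteq \sigma_{M(T,\Y)}$ on $M(T,\Y)$; then Proposition \ref{K1}(iii) embeds $S/\rho \cong M(T,\Y)/\psi(\rho)$ into a nice semidirect product of a factor semilattice of $\X$ by $T = S/\sigma$.

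The essentially only thing to verify carefully is the coherence of the two uses of ``left partial action'': namely, that the left partial action of $T$ on $\Y$ obtained by restricting $\cdot$ to $\Y$ is the same one that appears in the identification $S \cong M(S/\sigma, P(S))$. This is built into the wording of the hypothesis (the action $\cdot$ \emph{extends} the induced partial action), so there is no real obstacle beyond unpacking definitions; everything substantial has already been done inside Proposition \ref{K1}, whose proof in turn relies on Lemmas \ref{Main1} and \ref{epsilon}.
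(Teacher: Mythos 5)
Your proposal is correct and follows exactly the route the paper intends: the paper derives Corollary \ref{K2} from Proposition \ref{K1} purely via the $(2,1,1)$-isomorphism $S\cong M(S/\sigma,P(S))$ available for proper restriction monoids (citing \cite{CG}), transporting the congruence statements along that isomorphism just as you do.
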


Now we establish that the perfect restriction monoids $M(\Omega^*,\Y)$ studied in \cite{K2014} and \cite{Jones} satisfy the assumptions
of Proposition \ref{K1} and Corollary \ref{K2}.

Let $\Omega$ be a set, $\Y$ a semilattice with identity element $\1$, and consider a left partial action of
the free monoid $\Omega^*$ on $\Y$ which is a monoid homomorphism $\alpha\colon \Omega^*\to \op{T}_\Y$.
This defines a perfect restriction monoid $S=M(\Omega^*,\Y)$.
The free monoid $\Omega^*$ is a submonoid of the free group $\FG$ such that $\FG$ is generated by $\Omega^*$, and
each element of $\FG$ can be uniquely written as a product of members of an alternating sequence, see Example \ref{Ex1}.
This allows us to extend $\alpha$ to a left partial action, also denoted by $\alpha$, of $\FG$ on $\Y$ by
putting $\alpha_{t^{-1}}=\alpha'_t$ for every $t\in\Omega^*$, and then by
defining $\alpha_{w_1w_2\cdots w_n}=\alpha_{w_1}\alpha_{w_2}\cdots\alpha_{w_n}$ in $\op{T}_\Y$ for every alternating sequence $w_1,w_2,\ldots,w_n$ in $\Omega^*$.
Notice that the left partial action $\alpha\colon\FG\to \op{T}_\Y$ is not a monoid homomorphism any more
(cf.\ \cite[arXiv:1402.5849v2, p.\ 23]{K2014}).
However, if $g,h\in\FG$ such that $g,h$ and $gh$ (the concatenation of $g$ and $h$) are reduced words then $\alpha_g\alpha_h=\alpha_{gh}$.
For every $g\in\FG$ denote the identity element of the range of $\alpha_g$ by $M_g$.
By definition, we clearly have $M_1=\1$ and, since $M_{g^{-1}}$ equals the identity element of 
the domain of $\alpha_g$, we have
\begin{equation}\label{Mg}
M_g=g\diamond M_{g^{-1}}\quad \hbox{for every}\ g\in\FG.
\end{equation}

The restriction monoid $M(\FG,\Y)$ is easily seen to be an $F$-inverse monoid,
where the greatest element in the $\sigma$-class corresponding to $g\in\FG$ is $(M_g,g)$.
Hence $M(\FG,\Y)$ can be represented as a $P$-semigroup
$P(\FG,\Y,\X)$ where $\X$ is a semilattice and is, up to isomorphism, uniquely determined.
Futhermore, by \cite{Munn}, $\X$ can be constructed in the following manner.
The relation $\chi$ on the set $\Y\times\FG$, defined by $(A,g)\,\chi\,(B,h)$ if and only if $B=A\circ g^{-1}h$,
is a preorder, and so it induces a partial order $\le$ on the set
$\X$ of all $\overline{\chi}$-classes where $\overline{\chi}$ is the join of $\chi$ and its converse.
Denoting the $\overline{\chi}$-class of the element $(A,g)\in \Y\times \FG$ by $[A,g]$, we can deduce from the proof of
\cite[Theorem 7.4.5]{L=8} that
\begin{equation}\label{meet}
[A,g]\wedge[B,h]=\left[g^{-1}h\diamond\left((A\wedge M_{g^{-1}h})\circ g^{-1}h\wedge B\right),g\right]
\end{equation}
for every element $[A,g],[B,h]$ in the semilattice $\X$.
Hence $\X_0=\{[A,1]:A\in \Y\}$ is a principal ideal in $\X$, and the mapping $\Y\to\X_0,\ A\mapsto [A,1]$ is an isomorphism.
By identifying $\X_0$ with $\Y$ along this isomorphism, $\Y$ becomes a principal ideal of $\X$.
The rule $h\cdot [A,g]=[A,hg]\ (h\in\FG,\ [A,g]\in\X)$ defines an action of the group $\FG$ on $\X$ such that $\X=\FG\cdot \Y$, and
the restriction of this action to $\Y$ is obviously the left partial action $\diamond$.

On the other hand, the action $\cdot$ of $\FG$ on $\X$ naturally restricts to an action, also denoted by $\cdot$, of $\Omega^*$ on $\X$.
Now we establish that this action of $\Omega^*$ on $\X$ is 
 nice  
over $\FG$ with respect to the element $\1$.
It suffices to show that if $g,h\in\FG$ are reduced words such that there is no non-empty common prefix of $g$ and $h$
then the inequality $\1\ge g\cdot\1\wedge h\cdot\1$ holds in $\X$, or equivalently, $g\cdot\1\wedge h\cdot\1\in\Y$.
Applying (\ref{meet}), (\ref{Mg}) and that $M_g\in\Y$ for every $g\in\FG$, we obtain that 
\begin{eqnarray*}
[\1,g]\wedge [\1,h]\!\!\!&=&\!\!\!\left[g^{-1}h\diamond\left((\1\wedge M_{g^{-1}h})\circ g^{-1}h\wedge \1\right),g\right]\cr
\!\!\!&=&\!\!\!\left[g^{-1}h\diamond(M_{g^{-1}h}\circ g^{-1}h),g\right]=[M_{g^{-1}h},g]=[g^{-1}h\diamond M_{h^{-1}g},g].\cr
\end{eqnarray*}
Since $g^{-1}h$ is a reduced word by assumption, we have
$g^{-1}h\diamond M_{h^{-1}g}=\alpha_{g^{-1}h} M_{h^{-1}g}=\alpha_{g^{-1}}(\alpha_h M_{h^{-1}g})$,
whence it follows that both $\alpha_h M_{h^{-1}g}=h\diamond M_{h^{-1}g}$ and $\alpha_{g^{-1}}(\alpha_h M_{h^{-1}g})=(h\diamond M_{h^{-1}g})\circ g$ are defined.
This implies by the definition of the preorder $\chi$ that
$$[\1,g]\wedge [\1,h]=[(h\diamond M_{h^{-1}g})\circ g,g]=[h\diamond M_{h^{-1}g},1]\in\Y,$$
thus completing the proof that $g\cdot\1\wedge h\cdot\1\in\Y$.

Summarising, we have strengthened the results \cite[Theorem 28]{K2014} and \cite[Proposition 8.4]{Jones} in the following way:

\begin{Cor}
Let $S$ be a perfect restriction monoid such that $S/\sigma$ is a free monoid on a set $\Omega$.
Then the induced left partial action of $S/\sigma$ on $P(S)$ can be extended 
to an action $\cdot$ of $S/\sigma$ on a semilattice $\X$ contaning $P(S)$ as a principal ideal
such that the action $\cdot$ is 
 nice  
over the free group $\FG$ with respect to the identity element of $P(S)$ 
and $\X=\FG\cdot P(S)$.
Consequently, $S$ is 
$(2,1,1)$-embeddable in the 
 nice  
semidirect product $\X\rtimes \Omega^*$,
which is a $(2,1,1)$-subsemigroup in the inverse semigroup $\X\rtimes \FG$, and the following are valid:
\begin{enumerate}
\item[(i)] each $(2,1,1)$-congruence of $S$ extends to $\X\rtimes \Omega^*$ (along the 
$(2,1,1)$-embedding), and
\item[(ii)] for every $(2,1,1)$-congruence $\rho$ of $S$ with $\rho\subseteq\sigma$,
the restriction monoid $S/\rho$ embeds into a 
 nice  
semidirect product of a factor semilattice of $\X$ by $\Omega^*$.
\end{enumerate}
\end{Cor}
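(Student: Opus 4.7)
The plan is to verify the hypotheses of Corollary \ref{K2} and let that result supply the embedding and its properties. Concretely, given a perfect restriction monoid $S$ with $S/\sigma$ free on $\Omega$, I need to construct a semilattice $\X$ containing $P(S)$ as a principal ideal, together with an action of $\FG$ on $\X$ extending the induced left partial action of $\Omega^*=S/\sigma$ on $P(S)$, in such a way that the restricted action of $\Omega^*$ on $\X$ is nice over $\FG$ with respect to the identity of $P(S)$ and $\X=\FG\cdot P(S)$.

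By \cite[Theorem 1.1]{Jones}, I may identify $S$ with $M(\Omega^*,\Y)$ for $\Y=P(S)$, the induced left partial action $\alpha\colon\Omega^*\to\op{T}_\Y$ being a monoid homomorphism. The first step is to extend $\alpha$ to a left partial action of $\FG$ on $\Y$: since each $g\in\FG$ has a unique reduced alternating factorisation $g=w_1\cdots w_n$ with $w_i\in\Omega^*\cup(\Omega^*)^{-1}$, I set $\alpha_{t^{-1}}=\alpha'_t$ for $t\in\Omega^*$ and declare $\alpha_g=\alpha_{w_1}\cdots\alpha_{w_n}$. The resulting inverse monoid $M(\FG,\Y)$ is $F$-inverse, so Munn's construction represents it as a $P$-semigroup $P(\FG,\X,\Y)$ for a uniquely determined semilattice $\X$ which contains $\Y$ as a principal ideal, admits an action of $\FG$ extending $\diamond$, and satisfies $\X=\FG\cdot\Y$. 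An explicit model of $\X$ is the quotient of $\Y\times\FG$ by the symmetric closure of the preorder $(A,g)\,\chi\,(B,h)\iff B=A\circ g^{-1}h$, whose meet is described in formula (\ref{meet}).

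The heart of the matter is the niceness condition. Given the reduced alternating factorisation $g=w_1\cdots w_n$, I need $w_i\cdot\1\ge \1\wedge w_i\cdots w_n\cdot\1$ in $\X$ for each $i$, and this reduces, by concatenation of consecutive $w_j$'s of the same sign, to the following assertion: for any reduced $g,h\in\FG$ with no non-empty common prefix, $g\cdot\1\wedge h\cdot\1\in\Y$. I would prove this directly using (\ref{meet}): starting from $[\1,g]\wedge[\1,h]$ and using the identity (\ref{Mg}), together with the fact that $g^{-1}h$ is itself a reduced word (so $\alpha_{g^{-1}h}=\alpha_{g^{-1}}\alpha_h$ without further collapse), the computation forces the meet into the form $[C,1]$ with $C\in\Y$.

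The principal obstacle is precisely this bookkeeping in the preorder description of $\X$: one must carefully track which compositions $\alpha_g\alpha_h$ equal $\alpha_{gh}$ and which are merely restrictions of $\alpha_{gh}$, and the condition of having no common prefix is what guarantees the former. Once niceness is established, Corollary \ref{K2} applies verbatim, yielding the $(2,1,1)$-embedding $S\hookrightarrow\X\rtimes\Omega^*\leqslant\X\rtimes\FG$, and properties (i) and (ii) follow directly from the corresponding items of that corollary.
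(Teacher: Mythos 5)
Your proposal follows essentially the same route as the paper: identify $S$ with $M(\Omega^*,\Y)$ via Jones's characterisation, extend the partial action to $\FG$ along reduced alternating factorisations, realise $\X$ through Munn's $P$-semigroup construction, reduce niceness to the statement that $g\cdot\1\wedge h\cdot\1\in\Y$ for reduced $g,h$ with no common non-empty prefix, verify this with formula (\ref{meet}) and identity (\ref{Mg}), and then invoke Corollary \ref{K2}. This matches the paper's argument in both structure and the key computational ingredients.
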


\end{document}